
\documentclass[]{interact}

\usepackage{epstopdf}
\usepackage[caption=false]{subfig}

\usepackage[doublespacing]{setspace}
\usepackage[utf8]{inputenc}
\usepackage[ english]{babel}
\usepackage{textcomp} 
\usepackage{amsmath, amsthm, amsfonts, amssymb}
\usepackage{marvosym}
\usepackage{hyphenat}
\usepackage{hyperref}
\usepackage[numbers,sort&compress]{natbib}
\bibpunct[, ]{[}{]}{,}{n}{,}{,}

\newcommand{\E}{{\mathbb{E}}}
\newcommand{\R}{{\mathbb{R}}}
\newcommand{\D}{{\mathbb{D}}}

\newcommand{\N}{{\mathbb N}}

\newcommand{\ve}{\varepsilon}

\newcommand{\PP}{{\mathbb{P}}}
\theoremstyle{plain}
\newtheorem{thm}{Theorem}[section]
\newtheorem{lem}[thm]{Lemma}
\newtheorem{corollary}[thm]{Corollary}
\newtheorem{prop}[thm]{Proposition}

\theoremstyle{definition}
\newtheorem{defn}[thm]{Definition}
\newtheorem{example}[thm]{Example}

\theoremstyle{remark}
\newtheorem{remark}{Remark}

\begin{document}

\title{Large deviation principle for generalised multiple intersection local times of multidimensional Brownian motion}

\author{
\name{Andrey A. Dorogovtsev \textsuperscript{a}\thanks{CONTACT Andrey A. Dorogovtsev. Email: andrey.dorogovtsev@gmail.com} and Naoufel Salhi\textsuperscript{b}\thanks{CONTACT Naoufel Salhi. Email: salhi.naoufel@gmail.com} }
\affil{\textsuperscript{a}Institute of Mathematics, National Academy of Sciences of Ukraine, Ukraine; \textsuperscript{b}Laboratory of Stochastic Analysis and Applications, Department of Mathematics, Faculty of Sciences of Tunis, University of Tunis El Manar, Tunisia}
}

\maketitle
\begin{center}
\textit{To the memory of professor Habib Ouerdiane}
\end{center}
\begin{abstract}
In this paper we consider examples of positive generalised Wiener functions and we establish a large deviation principle for the generalised multiple intersection local time of the multidimensional Brownian motion.
\end{abstract}

\begin{keywords}
 Brownian motion; Wiener space; Sobolev space; intersection local time; positive generalised Wiener function; large deviation principle
\end{keywords}

\begin{amscode}
60F10, 60H40, 60J55
\end{amscode}

\section{Introduction}

In the recent work \cite{DS23} the authors considered the double self-intersection local time of a $d$-dimensional Brownian motion $w(t)=\big( w_1(t),\cdots,w_d(t)\big),\; 0\leqslant t\leqslant 1,$ in the case $d\geqslant 4$. Formally written as
\begin{equation}
\label{example1}
\rho_2(u)=\int _{\Delta_2} \delta_0 \big(w(t)-w(s)-u \big)  dsdt 
\end{equation} 
where $\Delta_2= \{ (s,t)\in [0,1]^2\, ;\, s<t\}$, $u\in \R^d\setminus\{0\}$ and $\delta_0$ denotes the Dirac delta function, the $2$-fold self-intersection local time is defined by
$$ \rho_2 (u)=\lim_{\varepsilon \to 0} \int_{\Delta_2  }  p^d_{\varepsilon}\big( w(t)-w(s)-u\big)dsdt   $$
where 
$$ p^d_{\ve}(z)=\frac{1}{ (2\pi \ve)^{d/2} } \exp \left( \frac{-\|z\|^2}{2\ve} \right),\; z\in \R^d,\, \ve >0
$$
($\|\cdot\|$ denotes the Euclidean norm in $\R^d$) and the limit is understood in the sense of the Sobolev space $\D^{2,\gamma}$ where $\gamma < 2-\frac{d}{2}$. We recall that, for any real number $\gamma$, the space $\D^{2,\gamma}$ is defined as the completion of 
$$ \left\lbrace   \sum_{k=0}^n I_k (f_k) \in L^2\big( \Omega, \sigma(w),\mathbb{P}\big),\, n\in \N\right\rbrace  $$
with respect to the norm  
$$ \Big \| \sum_{k=0}^n I_k (f_k)\Big \| _{2,\gamma} ^2=\sum_{k=0}^{n} (k+1)^{\gamma } \, \E \big[ I_k(f_k)^2 \big] \,.  $$
Here $\sigma(w)$ denotes the $\sigma$-field generated by the Brownian motion and $I_k(f_k)$ denotes the $k$-multiple It\^{o} stochastic integral of the deterministic, symmetric and square integrable kernel $f_k :[0,1]^k \to \R$ (see e.g. \cite{IPV95}). When the "differentiablity index" $\gamma$ is negative the Sobolev space $\D^{2,\gamma}$ contains strictly the space $L^2\big( \Omega, \sigma(w),\mathbb{P}\big)$ of square integrable Wiener random variables and its elements are called generalised Wiener functions. For more details about the Sobolev spaces $\D^{2,\gamma}$ and the generalised Wiener functions we refer to \cite{Mall97, Su88}. Particularly, $\rho_2(u)$ is a generalised Wiener function. Moreover, it is positive. We recall that a generalised Wiener function $\eta \in \D^{2,-\gamma}$ (for some $\gamma >0$) is positive if for any Wiener test function $F\in \D^{2,\gamma}$ (we recall also that $\D^{2,-\gamma}$ is the dual space of $\D^{2,\gamma}$) we have 
$$ F(\omega) \geqslant 0 \;\mu-\text{a.e.} \, \omega \in W_0^d \; \Rightarrow \; (\eta, F)\geqslant 0. $$
Here $ W_0^d=\left\lbrace \omega :[0,1]\to \R^d,\, \omega \text{ is continuous},\, \omega(0)=0\right\rbrace  $ denotes the Wiener space which is equipped with the sigma-field $ \mathcal{B}(W_0^d) $ generated by the supremum norm and by the Wiener measure  $\mu$. The $2$-fold self-intersection local time $\rho_2(u)$ is represented by a finite positive measure $\theta_u$ on the classical Wiener space $(W_0^d,\mathcal{B}(W_0^d),\mu)$, see Theorem \ref{thm0} in Appendix.\\

In \cite{FHSW97} (see also \cite{BOS16}) the authors considered the same formal expression \eqref{example1} but at the origin, i.e. $u=0$, and for any dimension $d$, in the framework of white noise analysis. They proved that if $\rho_2(0)$ is suitably renormalised then it can be considered as a generalised white noise functional. To do this the authors first introduce the Hilbert space 
$$ \big( L^2 \big)=L^2\Big( S^*\big( \R,\R^d\big), \mu_d\Big)  $$ 
where $S^*\big( \R,\R^d\big)$ is the topological dual of the Schwartz space $S\big( \R,\R^d\big)$ and $\mu_d$ is the white noise measure, see \cite{Kuo95}.
Denote by $\Gamma(A)$ the second quantization operator (see e.g. \cite{Dor94}), acting on $\big(L^2\big)$, of 
$$ (Af)_i(t)=\Big( -\dfrac{\mathrm{d}^2}{\mathrm{d}t^2} + t^2+1 \Big ) f_i(t),\quad i=1,\cdots,d,\; (f_1,\cdots,f_d)\in S\big( \R,\R^d\big). $$
For $k\in \N$, denote by $(\mathcal{S})_k$ the domain of $\Gamma \big( A^k \big)$. Now if we denote by $\big(\mathcal{S}\big)$ the projective limit of the sequence of Hibert spaces $\{ \big(\mathcal{S}\big)_k \}$ then its topological dual $\big(\mathcal{S}\big)^*$ is the space of generalised white noise functionals and we have a Gelfand triple 
$$ \big(\mathcal{S}\big) \subset \big( L^2 \big)\subset \big(\mathcal{S}\big)^*.$$
The authors proved (see \cite[Proposition 1]{FHSW97}) that whenever $s\neq t$, the expression $ \delta_0 \big(w(t)-w(s)\big) $ is a generalised white noise functional. Next they proved (see \cite[Theorem 2]{FHSW97}) that for any integer $N$ such that $2N>d-2$ the Bochner integral 
$$L^{(2N)}=\int_{\Delta_2} \delta_0^{(2N)}(w(t)-w(s))ds dt,$$ 
where $\delta_0^{(2N)}(w(t)-w(s))$ is the projection of $ \delta_0(w(t)-w(s)) $ on the chaos of order $k\geqslant 2N$, is a generalised white noise functional. $L^{(2N)}$ is then the renormalised 2-fold self intersection local time of the Brownian motion. But the renormalisation here leads to the loss of positivity and consequently to the loss of the representation by a positive measure.\\

\noindent Our aim is to establish a large deviation principle (LDP) for the measure $ \theta_u$. For more details about large deviation theory we refer for example to \cite{DZ09,Chen09}. In \cite{Chen09}, a large deviation result for the $k$-fold self intersection local time of $1$-dimensional Brownian motion, formally defined by  
\begin{equation}
\label{8645} T_k=\int _{\Delta_k} \prod _{j=1}^{k-1} \delta_{0}(w(t_{j+1})-w(t_j)) dt_1\cdots dt_k\,.
\end{equation}
was proved (here $k\geqslant2$ and $ \Delta_k= \{ (t_1,\cdots,t_k)\in [0,1]^k\, ;\, t_1<\cdots <t_k\} $). Precisely, if we denote by $L(1,x)$ the local time of the Brownian motion, i.e. the density of the occupation measure defined on the real line by 
$$ \nu (A)=\int_0^1 \mathbf{1}_A(w(t))dt, \; A \in \mathcal{B}(\R), $$
then $T_k$ has the following representation :
$$ T_k=\dfrac{1}{k!} \int_{-\infty}^{+\infty} L^k(1,x)dx.  $$
In \cite[Theorem 4.2.1.]{Chen09} the author proved that for every $k\in \N, k\geqslant2$,
$$ \lim _{t \to \infty } t^{-\frac{2}{k-1}} \log \PP \left\lbrace k! T_k \geqslant t \right\rbrace =-\dfrac{1}{4(k-1)} \left( \dfrac{k+1}{2} \right) ^{\frac{3-k}{k-1}} B\left( \dfrac{1}{k-1}, \dfrac{1}{2} \right),   $$
where $B(.,.)$ denotes the Beta function.\\

In the case of planar Brownian motion, a LDP was also established for the renormalised $2$-fold self intersection local time formally denoted
\begin{equation} \label{renormlised_planar}
\mathcal{T}_2=\int _{\Delta_2} \delta_0 \big(w(t)-w(s) \big)  dsdt-\E \int _{\Delta_2} \delta_0 \big(w(t)-w(s) \big)  dsdt
\end{equation}
Precisely, $\mathcal{T}_2$ can be defined through "triangular approximation" :
$$ \mathcal{T}_2=\sum_{n=0}^{\infty}\Big( \sum_{k=0}^{2^n-1} \big( \beta (A_{k,n})-\E \beta (A_{k,n})\big) \Big)$$
where the series converges in $L^2$-sense and 
$$ A_{k,n}=\Big[ \dfrac{2k}{2^{n+1}} ,  \dfrac{2k+1}{2^{n+1}} \Big]\times \Big[ \dfrac{2k+1}{2^{n+1}},\dfrac{2k+2}{2^{n+1}} \Big] \quad \forall \;k,n \in \N,$$
$$ \beta (A ) =L^2 \text{-}  \lim _{\ve \to 0} \int_{A} p_{\ve}^2(w(t)-w(s))ds dt \quad \forall \;A \subset \Delta_2. $$
In \cite[Theorem 4.3.1.]{Chen09} the author proved that 
$$ \lim _{t \to \infty } \dfrac{1}{t} \log \PP \left\lbrace \mathcal{T}_2 \geqslant t \right\rbrace =-c,   $$
where $c>0$ is the best constant of the Gagliardo-Nirenberg inequality :
$$ \|f\|_4\leqslant c \sqrt{\| \nabla f \|_2} \sqrt{\| f \|_2},\quad f\in L^2(\R^2),\, \nabla f \in L^2(\R^2). $$
Here $\|\cdot\|_2$ and $\|\cdot\|_4$ denote respectively the norms in $L^2(\R^2)$ and $L^4(\R^2)$.\\

In both the two previous results, the LDP is established for "usual" random variables, in contrast with the case of a Brownian motion in $\R^d$ where $d\geqslant 4$. As we mentioned in the beginning of this introduction, the intersection local time $\rho_2(u)$ becomes a positive generalised Wiener function and the representation by a measure will be the key result to investiagte a LDP.\\ 
Let us recall some terminology related to large deviation theory.
\begin{defn} \label{rate} \cite[Chapter 1.]{DZ09}
Let $\mathcal{X}$ be a topological space and $\mathcal{B}$ its Borel $\sigma$-field. A rate function is a lower semi-continuous mapping $I : \mathcal{X} \to [0,+\infty ]$ (such that for all $\alpha \in [0,+\infty )$ the level set $\{x \, :\, I(x)\leqslant \alpha \}$ is a closed subset of $\mathcal{X}$). A good rate function is a rate function for which all level sets are compact subsets of $\mathcal{X}$. The domain of $I$ is the set of points in $\mathcal{X}$ with  finite rate $\mathcal{D}_I=\{ x\,: \, I(x)<\infty \}$.
\end{defn}
\begin{defn}\cite[Chapter 1.]{DZ09}\label{LDP}
A family  $\{\mu_{\varepsilon}\}$ of probability measures on $(\mathcal{X},\mathcal{B})$ satisfies a large deviation principle (LDP) with a rate function $I$ if, for all $A \in \mathcal{B}$,
$$ -\inf _{x\in A^{\circ}} I(x) \leqslant \liminf _{\varepsilon \to 0} \varepsilon \log \mu _\varepsilon (A) \leqslant \limsup _{\varepsilon \to 0} \varepsilon \log \mu _\varepsilon (A) \leqslant - \inf _{x\in  \overline{A}} I(x). $$
Here $A^{\circ}$ and $ \overline{A}$ denote respectively the interior and the closure of $A$.
\end{defn}
\noindent Roughly speaking this means that, for small $\varepsilon$, $\mu _\varepsilon (A)$ "behaves like" $e^{-\frac{ \inf _{x\in A} I(x)}{ \varepsilon }  }$.\\
\noindent The definition \ref{LDP} is equivalent to the following statements :
\begin{enumerate}
\item (Upper bound) For any closed set $F\subset \mathcal{X}$
$$ \limsup_{\varepsilon \to 0}  \varepsilon \log \mu _\varepsilon (F) \leqslant -\inf_{x\in F} I(x). $$
\item (Lower bound) For any open set $G\subset \mathcal{X}$
$$ \liminf_{\varepsilon \to 0}  \varepsilon \log \mu _\varepsilon (G) \geqslant -\inf_{x\in G} I(x). $$
\end{enumerate}

The paper is organised as follows. In section \ref{sec2} we start by considering some examples of positive generalised Wiener functions. Some of such examples can be encountered in physics. Then we focus on two particular examples, the first one -- denoted $\rho_k(u_1,\cdots,u_{k-1})\,$-- is a generalisation of the Wiener function $\rho(u)$ discussed in \cite{DS23}, and the second one -- denoted $\eta\,$ -- is related to winding number of Brownian motion. We prove that these two examples are indeed positive generalised Wiener functions, and consequently they can be represented by finite positive measures on the Wiener space. In order to achieve these results we prove that positive generalised Wiener functions can be obtained as products of other "elementary" positive generalised Wiener functions. These elementary bricks are obtained as Dirac Delta functions of increments of the Brownian motion, i.e. of the form $\delta_u(w(t)-w(s))$. In Proposition \ref{prop1} we show that these simple expressions represent some generalised Wiener functions. Then, in Proposition \ref{prop2} we prove that the product of two generalised Wiener functions constructed from orthogonal subspaces of the underlying Hilbert space over which the noise is constructed is again a generalised Wiener function. These two results are the main results of section \ref{sec2}. In addition to some other results related to the measures associated to $\rho_k(u_1,\cdots,u_{k-1})$ and $\eta$, the last part of section \ref{sec2} is also devoted to explicit expressions for the dual pairing between the generalised Wiener function $\eta$ and some specific kinds of Wiener test functions. In section \ref{sec3} we establish a LDP for the measure $\theta_{u_1,\cdots,u_{k-1}}$ associated to the positive generalised Wiener function $\rho_k(u_1,\cdots,u_{k-1})$. To do this we first proceed by establishing an integral expression of $\theta_{u_1,\cdots,u_{k-1}}$ in which the Wiener measure appears in a conditional form. This result is stated in Lemma \ref{lem2}. Once this result is proved we are able to apply LDP for the Wiener measure, i.e. Schilder theorem. This enables us to prove a large deviation upper bound for the measure $\theta_{u_1,\cdots,u_{k-1}}$ in Theorem \ref{thm560}. But we fail to obtain a lower bound when we replace closed sets by open sets. In this context our results are comparable to those of V.A. Kuznetsov concerning winding angle of planar Brownian motion, see \cite{Kuz15}. Section \ref{Appendix} is an Appendix. In subsection \ref{App1} we recall some results about the construction of the measure $\theta_u$, its support and an associated integral formula. In section \ref{sec2}, inspired from that case, we obtain similar results for the measure $\theta_{u_1,\cdots,u_{k-1}}$ (Theorems \ref{pgwf_measure}, \ref{thm109} and \ref{thm110}). In subsection \ref{App2}, we recall the LDP for the Wiener measure i.e. Schilder theorem. Finally, subsection \ref{FAC} contains the definition of finite absolute continuity of measures in Banach spaces and a result related to the measures associated with positive generalised Wiener functions. This notion is important to describe the relation between the measure $\theta_{u_1,\cdots,u_{k-1}}$ and the Wiener measure (Proposition \ref{fac}).

\section{Examples of positive generalised Wiener functions}
\label{sec2}

The self-intersection local time $\rho_2(u)$ formally given by \eqref{example1} is our first example of positive generalised Wiener functions. In polymer models it plays an important role. A polymer is a long molecule consisting of small blocks called monomers which are tied with chemical bonds. Examples of polymers include -but are not limited to- proteins, DNA, RNA which are natural polymers and polyethylene which is a synthetic polymer. The Edwards model represents polymers by trajectories of the Brownian motion. The Hamiltonian of this model is proportional to the self intersection local time $\rho_2(0)$ : 
$$ H=-\beta \int _{\Delta_2} \delta_0 \big(w(t)-w(s) \big)  dsdt. $$
Here the constant $\beta >0$ is called \textit{the strength of self repellence}, see \cite[Chapter 3.]{Holl09}. Moreover, the positive generalised Wiener function $\rho_2(u)$ can be encountered in constructive quantum field theory, see e.g. \cite{Sym65}.\\
In addition to \eqref{example1} one can define several similar positive generalised Wiener functions. In \cite{DI19}, the authors defined the following intersection local time 
\begin{equation} \label{89}
I^x=\int_{\Delta_2} \delta_0 \big( x_2(t_2)-x_1(t_1)\big)dt_1dt_2:=L^2\text{-} \lim_{\ve \to 0} \int_{\Delta_2} p_\ve ^2 \big( x_2(t_2)-x_1(t_1)\big)dt_1dt_2 
\end{equation} 
where $x(t)=\big( x_1(t),x_2(t)\big),\, t\geqslant 0$ is a diffusion process in $\R^4$ ($x_1$ and $x_2$ are the projections of $x$ on the plane) with generator of the form $L=\nabla \cdot (A\nabla)$. Here $A : \R^4\to \R^{4\times 4}$ is a matrix-valued function which is symmetric, smooth with bounded derivatives of all orders and such that there exists a constant $\lambda \in (0,1)$ such  that for any $y,z \in \R^4$,
$$ \lambda \|y\|^2 \leqslant \big( A(z)y,y\big) \leqslant \dfrac{\|y\|^2}{\lambda }\,. $$
The integral in \eqref{89} measures the intersections of the two $2$-dimensional curves $x_1$ and $x_2$. Therefore, it can be associated with the winding number (see e.g. \cite[Chapter 3.]{AK98}) of $x_1$ and $x_2$. The winding number of two planar curves is the number of times that one curve winds around the other. It can be seen as the linking number (see e.g. \cite[Chapter 3.]{AK98}) of the two $3$-dimensional curves $t\mapsto \big(x_j(t),t\big),\, j=1,2$. The linking number of two $3$-dimensional disjoint closed curves $\gamma_1,\gamma_2 : [0,1]\to \R^3$ can be defined by the Gauss integral (see e.g. \cite[Chapter 3.]{AK98}) 
\begin{equation}
lk\big( \gamma_1,\gamma_2\big) =\dfrac{1}{4\pi} \int_0^1 \int_0^1 \dfrac{\big(\gamma_1'(t_1),\gamma_2'(t_2),\gamma_1(t_1)-\gamma_2(t_2)\big) }{\| \gamma_2(t_2)-\gamma_1(t_1)\|^3} dt_1dt_2
\end{equation}
where $(u,v,w)$ denotes the determinant constructed from the three vectors $u,v,w\in \R^3$. The linking number is intimitely related to the helicity of a vector field, see \cite[Theorem 4.4]{AK98}. Therefore, linking number is a key notion in hydrodynamics, meteorology and electromagnetics, see e.g. \cite[Chapter 1,\$ 5 ]{AK98}. \\
Positive Wiener functions can also appear in the form 
\begin{equation}\label{Eq9}
\int _{\Delta_2} \delta_0 \big(w(t_2)-w(t_1) \big) f\big( \beta_1(t_1),\beta_2(t_2)\big ) dt_1dt_2 
\end{equation}  
where $\beta_1(t),\, \beta_2(t)$ are two independent one-dimensional Brownian motions, independent from the $d$-dimensional Brownian motion $w(t)$, and $f : \R^2\to \R$ is a non-negative measurable map. A particular case of \eqref{Eq9} is the following 
\begin{equation} \label{Eq11}
 \int _{\Delta_2} \delta_0 \big(w(t_2)-w(t_1) \big) \mathbf{1}_{(0,+\infty)}\big( \beta(t_2)-\beta(t_1)\big ) dt_1dt_2
\end{equation}
The last expression may appear, for example, when we consider a $(d+1)$-dimensional Brownian motion $B(t)=(w(t),\beta(t))$ where $w$ is a $d$-dimensional Brownian motion independent from the one-dimensional Brownian motion $\beta$. If $w$ intersects itself, i.e. $w(t_2)=w(t_1)$ for some $t_1<t_2$, then this means that there are only two possible relative positions for the two points $B(t_1),\, B(t_2)$ and these two positions are related to $\beta (t_2)-\beta(t_1)$ being either positive or negative. The integral \eqref{Eq11} can be related to the winding number of Brownian motion. And it can be defined by 
\begin{equation} \label{Eq12}
 \lim_{\ve \to 0}\int _{\Delta_2}p_\ve^d \big(w(t_2)-w(t_1) \big) \mathbf{1}_{(0,+\infty)}\big( \beta(t_2)-\beta(t_1)\big ) dt_1dt_2
\end{equation}
or, more generally, for any $u\in \R^d$,
\begin{equation} \label{Eq13}
 \lim_{\ve \to 0}\int _{\Delta_2}p_\ve^d \big(w(t_2)-w(t_1)-u \big) \mathbf{1}_{(0,+\infty)}\big( \beta(t_2)-\beta(t_1)\big ) dt_1dt_2 
\end{equation}

In this section we will focus on the following particular examples of positive generalised Wiener functions.

\begin{example}\label{Example1} Let $k\in \mathbb{N},\, k\geqslant 2$ and $u_1,\cdots, u_{k-1}\in\R^d\setminus\{0\}$ where $d\geqslant 4$. Define
\begin{equation} \label{Eq66}
 \rho_k(u_1,\cdots,u_{k-1})=\int_{\Delta_k} \prod_{j=1}^{k-1}  \delta_{u_j}\big( w(t_{j+1})-w(t_j)\big)dt_1\cdots dt_k 
\end{equation}
where $w$ is a $d$-dimensional Brownian motion and, for every $u\in \R^d$, $\delta_u$ denotes the Dirac delta function at $u$ which can be defined by 
$$ \delta_u(\cdot)=\delta_0(\cdot -u). $$ 
This example is more general than the example defined by \eqref{example1} and which was considered in \cite{DS23}. For the moment, $\rho_k(u_1,\cdots,u_{k-1})$ is a formal notation. A rigorous meaning of \eqref{Eq66} will be obtained further in this section.
\end{example}

\begin{example} \label{Example2} Define
\begin{equation}\label{Eq217}
\eta=\int _{\Delta_2} \delta_u \big(w(t_2)-w(t_1) \big) f\big( \beta(t_2)-\beta(t_1)\big ) dt_1dt_2 
\end{equation}  
where $w$ is a $d$-dimensional Brownian motion independent from the one-dimensional Brownian motion $\beta$, and $u\in \R^d\setminus \{0\}$. For the moment, the function $f$ is such that $f\big( \beta(t_2)-\beta(t_1)\big )$ is a positive square integrable random variable for any $t_1<t_2$. We will try first to give an appropriate definition of $\eta$ and prove that it is indeed a positive generalised Wiener function and then establish some related properties. In the end of this section we will consider the case when the Brownian motions $w$ and $\beta$ are not independent.
\end{example}

In order to deal with the examples \ref{Example1} and \ref{Example2} we need some preliminary results.

\begin{prop}\label{prop1}
Let $u\in \R^d\setminus\{0\}$ and $0<s<t<1$. Then the limit 
$$ \delta_u(w(t)-w(s))=\lim_{\varepsilon \to 0}  p^d_{\varepsilon}\big( w(t )-w(s)-u) $$
is a generalised Wiener function that belongs to any Sobolev space $\D^{2,\gamma}$ such that $\gamma < -d/2$. Moreover,
$$ \big \| \delta_u(w(t)-w(s)) \big\| _{2,\gamma} \leqslant C p^d_{t-s}(cu) $$
where $C>0,\, c>0$ are constants that depend only on $\gamma$.
\end{prop}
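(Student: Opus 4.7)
The plan is to obtain an explicit chaos expansion of the approximant $p_\varepsilon^d(w(t)-w(s)-u)$ via Fourier inversion of the heat kernel, compute the $L^2$-norm of each chaos component by an exact Gaussian integration, and sum the resulting series using a Laplace-type integral representation of $(n+1)^\gamma$. Setting $\tau=t-s$, the inversion formula $p_\varepsilon^d(z)=(2\pi)^{-d}\int_{\R^d}e^{i\xi\cdot z}e^{-\varepsilon|\xi|^2/2}d\xi$ combined with the standard chaos expansion of the Wiener exponential
\[
e^{i\xi\cdot(w(t)-w(s))}=e^{-\tau|\xi|^2/2}\sum_{n\geqslant 0}\frac{i^n}{n!}\,I_n\!\big((\xi\mathbf{1}_{[s,t]})^{\otimes n}\big)
\]
yields, after a Fubini interchange,
\[
p_\varepsilon^d(w(t)-w(s)-u)=\sum_{n\geqslant 0}I_n(f_n^\varepsilon),\quad f_n^\varepsilon(r_1,\dots,r_n)=\frac{i^n\mathbf{1}_{[s,t]^n}(r_1,\dots,r_n)}{n!(2\pi)^d}\int_{\R^d}e^{-i\xi\cdot u}e^{-(\varepsilon+\tau)|\xi|^2/2}\xi^{\otimes n}d\xi.
\]

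Using $\langle\xi^{\otimes n},\eta^{\otimes n}\rangle_{(\R^d)^{\otimes n}}=(\xi\cdot\eta)^n$, a direct computation gives
\[
\E[I_n(f_n^\varepsilon)^2]=n!\|f_n^\varepsilon\|^2=\frac{\tau^n}{n!(2\pi)^{2d}}\iint e^{-i(\xi-\eta)\cdot u}e^{-(\varepsilon+\tau)(|\xi|^2+|\eta|^2)/2}(\xi\cdot\eta)^n\,d\xi\,d\eta.
\]
The key step is to assemble the generating function $G_\varepsilon(\lambda):=\sum_{n\geqslant 0}\lambda^n\,\E[I_n(f_n^\varepsilon)^2]$: swapping sum and integral by Fubini and using $\sum_n(\lambda\tau\xi\cdot\eta)^n/n!=e^{\lambda\tau\xi\cdot\eta}$ collapses everything to a single Gaussian integral on $\R^{2d}$, which I would evaluate by diagonalising the quadratic form in the new variables $\alpha=(\xi-\eta)/\sqrt{2}$, $\beta=(\xi+\eta)/\sqrt{2}$ (with eigenvalues $(\varepsilon+\tau(1\pm\lambda))/2$), producing the closed form
\[
G_\varepsilon(\lambda)=\frac{\exp\!\big(-|u|^2/(\varepsilon+\tau(1+\lambda))\big)}{(2\pi)^d(\varepsilon+\tau(1+\lambda))^{d/2}(\varepsilon+\tau(1-\lambda))^{d/2}},\qquad \lambda\in[0,1).
\]

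To extract the Sobolev norm I would then invoke the elementary identity $(n+1)^\gamma=\Gamma(-\gamma)^{-1}\int_0^1\lambda^n(-\log\lambda)^{-\gamma-1}d\lambda$ (valid for $\gamma<0$), which gives
\[
\|p_\varepsilon^d(w(t)-w(s)-u)\|_{2,\gamma}^2=\frac{1}{\Gamma(-\gamma)}\int_0^1 G_\varepsilon(\lambda)\,(-\log\lambda)^{-\gamma-1}d\lambda.
\]
The bound $\exp(-|u|^2/(\varepsilon+\tau(1+\lambda)))\leqslant\exp(-|u|^2/(2\tau+\varepsilon))$ on $[0,1]$ pulls the Gaussian in $u$ out of the integral, and the remaining integrand has the local behaviour $(1-\lambda)^{-d/2-\gamma-1}$ near $\lambda=1$, which is integrable exactly when $\gamma<-d/2$. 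Combining with the $(2\pi\tau)^{-d}$ prefactor produces the bound $\|\cdot\|_{2,\gamma}^2\leqslant C^2(p_\tau^d(cu))^2$ with $c=1/\sqrt{2}$ and $C$ depending only on $\gamma$.

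Finally, applying the same generating-function machinery to the difference $p_\varepsilon^d(w(t)-w(s)-u)-p_{\varepsilon'}^d(w(t)-w(s)-u)$ shows by dominated convergence in the $\lambda$-integral that the family $\{p_\varepsilon^d(w(t)-w(s)-u)\}_{\varepsilon>0}$ is Cauchy in $\D^{2,\gamma}$; the limit is the claimed generalised Wiener function, and it inherits the estimate by continuity of the norm. The main obstacle I anticipate is the algebraic diagonalisation and careful prefactor bookkeeping in the closed-form evaluation of $G_\varepsilon(\lambda)$, together with the precise matching of the singularity exponent at $\lambda=1$ to the Sobolev threshold $-d/2$.
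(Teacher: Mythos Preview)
Your argument is correct but proceeds along a genuinely different line from the paper. The paper writes the chaos expansion coordinate-wise in terms of Hermite polynomials,
\[
\delta_u(w(t)-w(s))=p^d_{t-s}(u)\sum_{k\geqslant 0}\sum_{n_1+\cdots+n_d=k}\prod_j\frac{1}{n_j!}H_{n_j}\!\Big(\tfrac{w_j(t)-w_j(s)}{\sqrt{t-s}}\Big)H_{n_j}\!\Big(\tfrac{u_j}{\sqrt{t-s}}\Big),
\]
and then bounds the $\D^{2,\gamma}$-norm using the pointwise Hermite estimate $|H_n(x)|/\sqrt{n!}\leqslant c_1(n\vee1)^{-(8\alpha-1)/12}e^{\alpha x^2}$ from \cite{IPA} together with a combinatorial bound on $\sum_{n_1+\cdots+n_d=k}\prod_j(n_j\vee1)^{-\beta}$. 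The threshold $\gamma<-d/2$ then appears only after letting the Hermite parameter $\alpha\to 1/2$, and the constant $c$ in the final bound depends on $\gamma$ through this choice of $\alpha$.

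Your Fourier/generating-function route bypasses the Hermite inequalities entirely: the closed form for $G_\varepsilon(\lambda)$ makes the singularity $(1-\lambda)^{-d/2}$ explicit, and the Mellin representation $(n+1)^\gamma=\Gamma(-\gamma)^{-1}\int_0^1\lambda^n(-\log\lambda)^{-\gamma-1}d\lambda$ converts the Sobolev norm into a one-dimensional integral whose convergence at $\lambda=1$ is \emph{exactly} the condition $\gamma<-d/2$. This is cleaner in that the threshold is read off directly rather than approached in a limit, and it yields the universal value $c=1/\sqrt{2}$ independent of $\gamma$, slightly sharper than what the paper's argument gives. The price is the additional bookkeeping in the Gaussian diagonalisation and the justification of the various Fubini/Tonelli interchanges (all of which are legitimate here since every term is nonnegative after the generating-function summation).
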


\begin{proof}
For every $n\in \N$ denote by $H_n$ the $n$-th Hermite polynomial defined by
$$ H_{n}(x)=(-1)^{n} e^{\frac{x^{2}}{2}} \left( \frac{d}{dx}  \right) ^{(n)} e^{ \frac{-x^{2}}{2}},\, x\in \R.
 $$
 From \cite{IPV95} we have the following It\^{o}-Wiener expansion  
\begin{align*}
 \delta_u(w(t)-w(s)) 
&=\sum_{k= 0}^{\infty}  \sum _{n_1+\cdots+n_d=k} \prod _{1\leqslant j\leqslant d} \left\lbrace  \dfrac{1}{  n_j! }\,H_{n_j}\Big( \dfrac{w_j(t)-w_j(s)}{\sqrt{ t-s} } \Big)\,H_{n_j}\Big( \dfrac{u_j}{\sqrt{ t-s} } \Big) \right\rbrace \,  p^d_{t-s}(u)  
\end{align*} 
from which we deduce 
\begin{align*}
\Big \|  \delta_u(w(t)-w(s)) \Big \|_{2,\gamma}^2
&=\sum_{k= 0}^{\infty}   (k+1)^{\gamma} \sum _{n_1+\cdots+n_d=k} \prod _{1\leqslant j\leqslant d} \left\lbrace  \dfrac{1}{  n_j! }\,H_{n_j}^2\Big( \dfrac{u_j}{\sqrt{ t-s} } \Big) \right\rbrace \,  \Big( p^d_{t-s}(u)  \Big)^2.
\end{align*} 
Let $\alpha   \in \big ]1/4, 1/2 \big [$.\\
From \cite{IPV95} we get the following estimations : 
\begin{align*}
& \exists \, c_1=c_1(\alpha)\, ; \; \forall \; n\in \N,\, \forall \,x\in \R\; ; \dfrac{|H_n(x)|}{ \sqrt{n!} }\leqslant c_1 (n\vee 1)^{ -\frac{8\alpha -1}{12}} e^{\alpha x^2}, \quad ( n\vee 1=\max \{n,1\}), \\
& \forall \; \beta \in ]0,1[\; \exists \, c_2=c_2( \beta)\, ; \; \forall \; k\in \N\, ; \sum _{n_1+\cdots+n_d=k} \prod _{1\leqslant j\leqslant d} (n_j\vee 1)^{ -\beta } \leqslant c_2 (k\vee 1)^{ d( 1-\beta ) -1} \,.
\end{align*}
Using these inequalities we deduce the existence of constants $c_3=c_3(\alpha),c_4=c_4(\alpha) $ such that 
\begin{align*}
\Big \| \delta_u(w(t)-w(s)) \Big \|_{2,\gamma}^2 
&\leqslant  c_3 \sum_{k= 0}^{\infty}   (k+1)^{\gamma}  (k\vee 1)^{ d\big( 1-\frac{8\alpha -1}{6} \big) -1}  \exp \Big( \dfrac{2\alpha \|u\|^2}{t-s} \Big)\Big( p^d_{t-s}(u)  \Big)^2\\
&\leqslant  c_4 \sum_{k= 0}^{\infty}   (k+1)^{\gamma}  (k\vee 1)^{ d\big( 1-\frac{8\alpha -1}{6} \big) -1}\quad  \dfrac{ \exp \Big( \frac{(2\alpha-1) \|u\|^2}{t-s} \Big)}{ (t-s)^d }\,,
\end{align*}
which is finite provided that $ \gamma+d\big( 1-\frac{8\alpha -1}{6} \big)  <0 $, which leads, after letting $\alpha\to 1/2$, to the condition $\gamma < -d/2$ and finishes the proof.
\end{proof}

\begin{prop}\label{prop2} Consider two subspaces $H_1, \, H_2$ of the Hilbert space $L_2([0,1],\R^d)$ and suppose they are orthogonal. Associate to them the following $\sigma$-fields 
$$  \mathcal{A}_j=\sigma \Big( \Big\{ \int_0^1h_i(t)dw_i(t),\, i=1,\cdots,d, \, h=(h_1,\cdots, h_d)\in H_j \Big\}\Big), \, j=1,2. $$
Let $\eta _j \in \D^{2,\gamma_j},\, j=1,2,$ be two generalised Wiener functions (so $\gamma _j <0,\; j=1,2$) such that $\eta_j$ is $\mathcal{A}_j$-measurable, $j=1,2$. Then their Wick product $ :\eta_1\;\eta_2: $ is a generalised Wiener function that belongs to the Sobolev space $\D^{2,\gamma_1+\gamma_2}$. Moreover,
$$ \big \| :\eta_1\;\eta_2: \big\| _{2,\gamma_1+\gamma_2} \leqslant   \big \| \eta_1  \big\| _{2,\gamma_1 }  \big \| \eta_2 \big\| _{2,\gamma_2}\,. $$

\end{prop}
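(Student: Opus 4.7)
The plan is to work entirely at the level of chaos expansions, exploiting the fact that orthogonality of $H_1$ and $H_2$ in $L_2([0,1],\R^d)$ translates into vanishing contractions between tensors that live in $H_1^{\hat\otimes k}$ and tensors that live in $H_2^{\hat\otimes l}$. First I would decompose $\eta_j=\sum_{k\ge 0} I_k(f_k^{(j)})$ where, by the $\mathcal{A}_j$-measurability assumption, the symmetric kernels satisfy $f_k^{(j)}\in H_j^{\hat\otimes k}$. The Wick product is then \emph{defined} by its chaos series
\begin{equation*}
:\eta_1\,\eta_2: \;=\; \sum_{n\ge 0} I_n(g_n),\qquad g_n=\sum_{k+l=n} f_k^{(1)}\hat\otimes f_l^{(2)},
\end{equation*}
so the whole statement reduces to estimating $\sum_n (n+1)^{\gamma_1+\gamma_2}\,n!\,\|g_n\|^2$.

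The first computational step is to show that for $f\in H_1^{\hat\otimes k}$ and $g\in H_2^{\hat\otimes l}$ one has the identity $\|f\hat\otimes g\|_{L_2([0,1]^{k+l},\R^{d(k+l)})}^2=\tfrac{k!\,l!}{(k+l)!}\|f\|^2\|g\|^2$. This follows by writing the symmetrization as an average over the $(k+l)!$ permutations and observing that the inner product between two permuted terms vanishes unless the permutation preserves the partition into the first $k$ and last $l$ coordinates, because every mismatched slot contributes a factor $\langle\cdot,\cdot\rangle_{H_1,H_2}=0$. The surviving $k!\,l!$ terms each contribute $\|f\|^2\|g\|^2$ by the symmetry of $f$ and $g$. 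The same orthogonality argument shows that the different summands in $g_n$ (corresponding to different splittings $k+l=n$) are mutually orthogonal in $L_2([0,1]^n,\R^{dn})$, so
\begin{equation*}
\E\big[I_n(g_n)^2\big]=n!\,\|g_n\|^2=\sum_{k+l=n} \E\big[I_k(f_k^{(1)})^2\big]\,\E\big[I_l(f_l^{(2)})^2\big].
\end{equation*}

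The final step is the combinatorial inequality $(n+1)^{\gamma_1+\gamma_2}\le (k+1)^{\gamma_1}(l+1)^{\gamma_2}$ whenever $k+l=n$ and $\gamma_1,\gamma_2<0$, which is immediate from $k+1\le n+1$, $l+1\le n+1$ and the fact that $x\mapsto x^{\gamma_j}$ is decreasing. Inserting it into the formula above and recognizing a Cauchy product of two nonnegative series yields
\begin{equation*}
\|:\eta_1\,\eta_2:\|_{2,\gamma_1+\gamma_2}^2\le \Bigl(\sum_k (k+1)^{\gamma_1}\E[I_k(f_k^{(1)})^2]\Bigr)\Bigl(\sum_l (l+1)^{\gamma_2}\E[I_l(f_l^{(2)})^2]\Bigr)=\|\eta_1\|_{2,\gamma_1}^2\,\|\eta_2\|_{2,\gamma_2}^2,
\end{equation*}
and taking square roots finishes the proof; finiteness of this bound simultaneously establishes $:\eta_1\,\eta_2:\in\D^{2,\gamma_1+\gamma_2}$.

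The only genuinely delicate step is the norm identity $\|f\hat\otimes g\|^2=\tfrac{k!\,l!}{(k+l)!}\|f\|^2\|g\|^2$; everything else is essentially bookkeeping. I would probably verify it first for simple tensors $f=h_1\otimes\cdots\otimes h_k$ with $h_i\in H_1$ and $g=e_1\otimes\cdots\otimes e_l$ with $e_j\in H_2$, where the orthogonality $\langle h_i,e_j\rangle=0$ makes the permutation count entirely transparent, and then extend by density/linearity to $H_1^{\hat\otimes k}$ and $H_2^{\hat\otimes l}$.
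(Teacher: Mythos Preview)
Your argument is correct and follows essentially the same route as the paper: chaos expansion, the observation that by orthogonality of $H_1$ and $H_2$ the product $I_k(f_k^{(1)})I_l(f_l^{(2)})$ lands in the $(k+l)$th chaos with second moment equal to the product of the second moments, the elementary inequality $(k+l+1)^{\gamma_1+\gamma_2}\le (k+1)^{\gamma_1}(l+1)^{\gamma_2}$, and a Cauchy-product rearrangement. The only difference is presentational: the paper does the middle step at the random-variable level, writing $\E\big(\sum_{i+j=k}I_iJ_j\big)^2=\sum_{i+j=k}\E(I_i^2)\E(J_j^2)$ directly from the independence of $\mathcal{A}_1$ and $\mathcal{A}_2$ together with chaos orthogonality for the cross terms, whereas you unpack the same fact at the kernel level via the identity $\|f\hat\otimes g\|^2=\tfrac{k!\,l!}{(k+l)!}\|f\|^2\|g\|^2$.
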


\begin{proof}
Consider the It\^{o}-Wiener expansion of $\eta_1$ and $\eta_2$ :
$$ \eta_1= \sum_{k= 0}^{\infty} I_k,\quad \eta_2= \sum_{k= 0}^{\infty} J_k $$
where for each $k$, $I_k$ and $J_k$ are $k$-multiple stochastic integrals. Due to the specific construction of the $\sigma$-fields $\mathcal{A}_j,\,j=1,2,$ each product $I_iJ_j$ represents an $(i+j)$-multiple stochastic integral. Consequently one can consider the following series of multiple stochastic integrals :
$$ \sum_{k= 0}^{\infty} \sum_{i+j=k} I_iJ_j\,. $$
We have 
\begin{align*}
& \sum_{k= 0}^{\infty} (k+1)^{\gamma_1+\gamma_2}\E \Bigg[ \Big( \sum_{i+j=k} I_iJ_j \Big)^2 \Bigg]\\
&=\sum_{k= 0}^{\infty} (k+1)^{\gamma_1+\gamma_2} \sum_{i+j=k} \E\big(I_i^2\big) \E \big(J_j ^2\big)\quad (\text{using the  independance of }\mathcal{A}_1,\, \mathcal{A}_2)\\
&\leqslant \sum_{k= 0}^{\infty}  \sum_{i+j=k} (i+1)^{\gamma_1 } \E\big(I_i^2\big) (j+1)^{ \gamma_2}\E \big(J_j ^2\big) \quad (\text{because } \gamma_1<0,\, \gamma_2<0 )\\
&=\big \| \eta_1  \big\| _{2,\gamma_1 }^2  \big \| \eta_2 \big\| _{2,\gamma_2}^2\,. 
\end{align*}
This proves the result.
\end{proof}

\begin{remark}
Proposition \ref{prop2} describes the same result provided in mathematical physics about product of Schwartz distributions, see e.g. \cite{Vlad71}.
\end{remark}

\begin{remark}
One can check that Proposition \ref{prop2} dealing with product of generalised Wiener functions still hold if we consider a product of a generalised Wiener function and a random variable, i.e. when $\eta_1\in \mathbb{D}^{2,\gamma}$ where $\gamma <0$ and $\eta_2\in \mathbb{D}^{2,0}=L^2$, see e.g. \cite{Mall97,Kuo95}.
\end{remark} 

Now let us go back to Example \ref{Example1} and give it an appropriate definition.

\begin{thm} \label{PGWF}
Let $k\in \mathbb{N},\, k\geqslant 2$ and $u_1,\cdots, u_{k-1}$ vectors from $\R^d\setminus\{0\}$. Then, the limit
$$ \rho_k(u_1,\cdots,u_{k-1})=\lim_{\varepsilon \to 0}\int_{\Delta_k} \prod_{j=1}^{k-1}  p^d_{\varepsilon}\big( w(t_{j+1})-w(t_j)-u_j\big)dt_1\cdots dt_k $$
is a positive generalised Wiener function that belongs to any Sobolev space $\D^{2,\gamma}$ such that $\gamma < -(k-1)d/2$. 
\end{thm}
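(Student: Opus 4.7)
The plan is to realize $\rho_k(u_1,\ldots,u_{k-1})$ as a Bochner integral over $\Delta_k$ of a parametric family of generalized Wiener functions built from Propositions \ref{prop1} and \ref{prop2}. Fix $(t_1,\ldots,t_k)\in\Delta_k$ and observe that the factor $\delta_{u_j}(w(t_{j+1})-w(t_j))$ depends only on the increment of $w$ over $[t_j,t_{j+1}]$, so it is measurable with respect to the $\sigma$-field generated by the subspace $H_j$ of $L_2([0,1],\R^d)$ consisting of vector functions supported on $[t_j,t_{j+1}]$. The subspaces $H_1,\ldots,H_{k-1}$ are pairwise orthogonal. Proposition \ref{prop1} places each factor in $\D^{2,\gamma'}$ for any $\gamma'<-d/2$ with the explicit bound $\|\delta_{u_j}(w(t_{j+1})-w(t_j))\|_{2,\gamma'}\leqslant C\,p^d_{t_{j+1}-t_j}(cu_j)$. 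Iterating Proposition \ref{prop2}, whose Wick product reduces to the ordinary product when the factors sit in independent chaoses, one obtains
\begin{equation*}
\Big\|\prod_{j=1}^{k-1}\delta_{u_j}(w(t_{j+1})-w(t_j))\Big\|_{2,(k-1)\gamma'}\leqslant C^{k-1}\prod_{j=1}^{k-1} p^d_{t_{j+1}-t_j}(cu_j).
\end{equation*}

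Setting $\gamma=(k-1)\gamma'$ produces any index $\gamma<-(k-1)d/2$. To confirm that the Bochner integral converges in $\D^{2,\gamma}$, pass to the variables $s_j=t_{j+1}-t_j$, integrate the remaining free variable over $[0,1]$, and estimate
\begin{equation*}
\int_{\Delta_k}\prod_{j=1}^{k-1}p^d_{t_{j+1}-t_j}(cu_j)\,dt_1\cdots dt_k\leqslant \prod_{j=1}^{k-1}\int_0^1 p^d_s(cu_j)\,ds<+\infty,
\end{equation*}
the finiteness coming from the Gaussian decay of $p^d_s(cu_j)$ as $s\to 0^+$, which uses crucially that each $u_j\neq 0$. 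This yields $\rho_k(u_1,\ldots,u_{k-1})$ as a well-defined element of $\D^{2,\gamma}$.

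For convergence of the approximation $\rho_k^\ve=\int_{\Delta_k}\prod_j p^d_\ve(w(t_{j+1})-w(t_j)-u_j)\,dt_1\cdots dt_k$ and for positivity, one proceeds as follows. A variant of Proposition \ref{prop1} (the same chaos-expansion computation applied to the Gaussian regularisation $p^d_\ve$ in place of the Dirac mass) supplies a majorant of the form $\|p^d_\ve(w(t)-w(s)-u)\|_{2,\gamma'}\leqslant C\,p^d_{t-s+c'\ve}(cu)$, while pointwise in $\Delta_k$ the integrand converges in $\D^{2,(k-1)\gamma'}$ by Propositions \ref{prop1} and \ref{prop2}. Dominated convergence on $\Delta_k$ then yields $\rho_k^\ve\to\rho_k$ in $\D^{2,\gamma}$. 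Positivity is automatic: each $\rho_k^\ve$ is the integral of a non-negative density, so $(\rho_k^\ve,F)\geqslant 0$ for every non-negative $F\in\mathcal{FC}_b^\infty(W_0^d)$, and this inequality survives passage to the limit. The principal difficulty is controlling the approximating kernels uniformly in $\ve$ near the diagonals $\{t_{j+1}=t_j\}$ of $\Delta_k$; this is handled by the Gaussian factor $\exp\bigl(-\|u_j\|^2/(2(t_{j+1}-t_j+c'\ve))\bigr)$, which enforces integrability precisely because the translations $u_j$ are non-zero.
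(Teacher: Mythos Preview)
Your proof is correct and follows essentially the same route as the paper: invoke Propositions~\ref{prop1} and~\ref{prop2} on the orthogonal increment subspaces to bound the pointwise integrand in $\D^{2,\gamma}$ by $C\prod_j p^d_{t_{j+1}-t_j}(cu_j)$, then integrate over $\Delta_k$. You supply more detail than the paper does on two points---the uniform-in-$\varepsilon$ majorant justifying $\rho_k^\varepsilon\to\rho_k$ in $\D^{2,\gamma}$, and the passage of positivity through the limit---both of which the paper leaves implicit.
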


\begin{proof}
Using propositions \ref{prop1} and \ref{prop2} one can deduce that for any $(t_1,\cdots,t_k)\in \Delta_k$ and every $\gamma <-(k-1)d/2 $, the limit 
$$ \lim_{\varepsilon \to 0} \prod_{j=1}^{k-1}  p^d_{\varepsilon}\big( w(t_{j+1})-w(t_j)-u_j\big) $$
exists in the space $\D^{2,\gamma}$. Moreover, if we denote this limit by $\Phi (t_1,\cdots,t_k)$ then
$$ \big \| \Phi (t_1,\cdots,t_k)  \big\| _{2,\gamma}  \leqslant C \prod_{j=1}^{k-1}  p^d_{t_{j+1}-t_j}\big( cu_j\big) $$
for some constants $C>0,\, c>0$. For every $n\in \N$ denote by $I_n(t_1, \cdots , t_k)$ the $n$th multiple integral from the It\^{o}-Wiener expansion of $\Phi(t_1, \cdots , t_k)$. Therefore,
\begin{align*}
\big \| \rho_k(u_1,\cdots,u_{k-1})  \big\| _{2,\gamma} ^2
&=\Big \| \int_{\Delta_k}  \Phi (t_1,\cdots,t_k)dt_1\cdots dt_k \Big\| _{2,\gamma} ^2\\
&= \sum_{n= 0}^{\infty} (n+1)^{\gamma }\E  \Big( \int_{\Delta_k}  I_n (t_1,\cdots,t_k) dt_1\cdots dt_k \Big)^2    \\
&= \sum_{n= 0}^{\infty} (n+1)^{\gamma }\E  \int_{\Delta_k \times \Delta_k}  I_n (t_1,\cdots,t_k)I_n (s_1,\cdots,s_k) dt_1\cdots dt_k ds_1\cdots ds_k    \\
&=  \int_{\Delta_k \times \Delta_k} \Big< \Phi (t_1,\cdots,t_k),\Phi (s_1,\cdots,s_k) \Big>_{\D^{2,\gamma}}  dt_1\cdots dt_k ds_1\cdots ds_k    \\
&\leqslant  \int_{\Delta_k \times \Delta_k} \big \| \Phi (t_1,\cdots,t_k)  \big\| _{2,\gamma} \big \| \Phi (s_1,\cdots,s_k)  \big\| _{2,\gamma} dt_1\cdots dt_k ds_1\cdots ds_k    \\
&=  \Big( \int_{\Delta_k } \big \| \Phi (t_1,\cdots,t_k)  \big\| _{2,\gamma}   dt_1\cdots dt_k  \Big)^2    \\
&\leqslant C^2  \Big( \int_{\Delta_k } \prod_{j=1}^{k-1}    p^d_{t_{j+1}-t_j}\big( cu_j\big)   dt_1\cdots dt_k  \Big)^2    \\
& <\infty
\end{align*}
where $\big< .,. \big>_{\D^{2,\gamma}}$ denotes the inner product in the Hilbert space $\D^{2,\gamma}$.
Theorem is proved.
\end{proof}

Using the representation of positive generalised Wiener functions by measures on the Wiener space (see \cite[Theorem 4.1]{Su88} or \cite[Theorem 5]{KSW95}) one can deduce the following theorem.

\begin{thm} \label{pgwf_measure}
There exists a unique finite positive measure $\theta_{u_1,\cdots,u_{k-1}}$ on the Wiener space $(W_0^d,\mathcal{B}(W_0^d) )$ such that 
$$\forall \; F\in \mathcal{FC}_b^{\infty}\big( W_0^d\big),\quad (\rho_k(u_1,\cdots,u_{k-1}), F)=\int_{ W_0^d }  F(\omega )\,\theta_{u_1,\cdots,u_{k-1} }(d\omega).$$
\end{thm}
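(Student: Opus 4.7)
The plan is to deduce this theorem as a direct consequence of Sugita's representation theorem for positive generalized Wiener functionals, once positivity of $\rho_k(u_1,\cdots,u_{k-1})$ is established.

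First I would verify positivity. For each $\ve>0$, the approximation
\[
\rho_k^{\ve}=\int_{\Delta_k}\prod_{j=1}^{k-1} p^d_{\ve}\bigl(w(t_{j+1})-w(t_j)-u_j\bigr)\,dt_1\cdots dt_k
\]
is a nonnegative random variable since $p^d_{\ve}\geqslant 0$. Hence for every nonnegative test function $F\in\mathcal{FC}_b^{\infty}(W_0^d)\subset\D^{2,-\gamma}$ (where $\gamma<-(k-1)d/2$ so that $\rho_k\in\D^{2,\gamma}$), one has $\E[\rho_k^{\ve}F]\geqslant 0$. By the previous theorem, $\rho_k^{\ve}\to\rho_k(u_1,\cdots,u_{k-1})$ in $\D^{2,\gamma}$, and consequently, by continuity of the dual pairing,
\[
(\rho_k(u_1,\cdots,u_{k-1}),F)=\lim_{\ve\to 0}\E[\rho_k^{\ve}F]\geqslant 0.
\]
This shows that $\rho_k(u_1,\cdots,u_{k-1})$ is a positive generalized Wiener function in the sense recalled in the Introduction.

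Next I would invoke Sugita's theorem (the reference cited in the excerpt), which asserts that every positive element of $\D^{2,\gamma}$ (for some $\gamma<0$) is represented by a unique finite positive Borel measure on the Wiener space $(W_0^d,\mathcal{B}(W_0^d))$. Applying this to $\rho_k(u_1,\cdots,u_{k-1})$ yields a finite positive measure $\theta_{u_1,\cdots,u_{k-1}}$ on $W_0^d$ such that the identity
\[
(\rho_k(u_1,\cdots,u_{k-1}),\eta)=\int_{W_0^d}\eta(\omega)\,\theta_{u_1,\cdots,u_{k-1}}(d\omega)
\]
holds for all $\eta\in\mathcal{FC}_b^{\infty}$ (in fact for a larger class of test functions after taking closures).

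Uniqueness of $\theta_{u_1,\cdots,u_{k-1}}$ is part of Sugita's statement, but it can also be seen directly: the cylindrical test functions $\mathcal{FC}_b^{\infty}$ separate points of $W_0^d$ and form a measure-determining class on the Borel $\sigma$-field of $W_0^d$, so any two finite Borel measures that agree on this class must coincide. No serious obstacle is expected here — the main content has already been accomplished in the preceding theorem showing that $\rho_k(u_1,\cdots,u_{k-1})$ belongs to the negative-index Sobolev space, so the present theorem is essentially a packaging step invoking the Sugita correspondence.
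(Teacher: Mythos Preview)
Your proposal is correct and matches the paper's approach exactly: the paper does not give a separate proof for this theorem but simply states that it follows from Sugita's representation theorem for positive generalized Wiener functions, once the positivity of $\rho_k(u_1,\cdots,u_{k-1})$ (asserted in the preceding theorem) is in hand. Your write-up actually supplies more detail than the paper does, in that you spell out the positivity argument via the nonnegative approximations $\rho_k^{\ve}$ and continuity of the pairing, which the paper leaves implicit.
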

Following the proofs of Theorems \ref{thm1} and \ref{thm2} (see \cite{DS23}) one can prove 
\begin{thm}\label{thm109}
The measure $\theta_{u_1,\cdots,u_{k-1}}$ is supported by the set 

$$ E_ {u_1,\cdots,u_{k-1} }=\left\lbrace \omega \in W_0^d\; ; \; \exists \;0\leqslant t_1<\cdots <t_k \leqslant 1,\, \omega (t_{j+1})-\omega(t_j)=u_j,\, j=1,\cdots,k-1 \right\rbrace . $$
\end{thm}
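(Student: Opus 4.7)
The strategy is to prove that $\theta_{u_1,\ldots,u_{k-1}}\bigl(W_0^d\setminus E_{u_1,\ldots,u_{k-1}}\bigr)=0$, which is equivalent to the stated support property. The proof will mirror the argument outlined for Theorems~5 and~7 of \cite{AN}, with the $k$-fold integral formula (Lemma~\ref{lem2}) taking the place of Theorem~\ref{thm2}.

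First I would verify that $E_{u_1,\ldots,u_{k-1}}$ is a closed subset of $W_0^d$. If $\omega_n\to\omega$ uniformly and each $\omega_n$ admits times $t_1^n<\cdots<t_k^n$ with $\omega_n(t_{j+1}^n)-\omega_n(t_j^n)=u_j$, extracting a convergent subsequence from the compact set $[0,1]^k$ yields limiting times $t_1\leqslant\cdots\leqslant t_k$. Uniform convergence combined with uniform continuity of $\omega$ gives $\omega(t_{j+1})-\omega(t_j)=u_j$, and the hypothesis $u_j\neq 0$ forces $t_j<t_{j+1}$ (since $\omega$ is continuous). Hence $\omega\in E_{u_1,\ldots,u_{k-1}}$, proving closedness.

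Next I would invoke the integral representation (the $k$-fold analogue of Theorem~\ref{thm2}, stated as Lemma~\ref{lem2} in the paper):
\begin{equation*}
\int_{W_0^d} F(\omega)\,\theta_{u_1,\ldots,u_{k-1}}(d\omega)=\int_{\Delta_k}\E\bigl[F(w)\,\big|\,w(t_{j+1})-w(t_j)=u_j,\;j=1,\ldots,k-1\bigr]\prod_{j=1}^{k-1}p^d_{t_{j+1}-t_j}(u_j)\,dt_1\cdots dt_k.
\end{equation*}
The key observation is that for every fixed $(t_1,\ldots,t_k)\in\Delta_k$, the regular conditional distribution of $w$ given the constraints $w(t_{j+1})-w(t_j)=u_j$ is supported on paths satisfying those very increment equalities, and each such path lies in $E_{u_1,\ldots,u_{k-1}}$. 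Consequently, whenever $F$ is a bounded continuous nonnegative function that vanishes on $E_{u_1,\ldots,u_{k-1}}$, the conditional expectation on the right-hand side is identically zero, so $\int F\,d\theta_{u_1,\ldots,u_{k-1}}=0$.

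Finally, I would convert this into the support statement using the inner regularity of the finite measure $\theta_{u_1,\ldots,u_{k-1}}$ on the Polish space $W_0^d$. Given any compact $K\subseteq W_0^d\setminus E_{u_1,\ldots,u_{k-1}}$, the distance $d(K,E_{u_1,\ldots,u_{k-1}})$ is positive, so Urysohn's lemma provides a bounded continuous $F:W_0^d\to[0,1]$ with $F\equiv 1$ on $K$ and $F\equiv 0$ on $E_{u_1,\ldots,u_{k-1}}$, yielding $\theta_{u_1,\ldots,u_{k-1}}(K)\leqslant\int F\,d\theta=0$. Taking suprema over such $K$ gives $\theta_{u_1,\ldots,u_{k-1}}(W_0^d\setminus E_{u_1,\ldots,u_{k-1}})=0$. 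The main obstacle is technical rather than conceptual: the integral formula from Lemma~\ref{lem2} is naturally stated for $F\in\mathcal{FC}_b^\infty$, so one must extend it to bounded continuous $F$ by a density and dominated convergence argument, using that $\int_{\Delta_k}\prod_j p^d_{t_{j+1}-t_j}(u_j)\,dt_1\cdots dt_k=\theta_{u_1,\ldots,u_{k-1}}(W_0^d)<\infty$ bounds the right-hand kernel.
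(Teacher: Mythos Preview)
Your argument is correct and is the natural route: closedness of $E_{u_1,\ldots,u_{k-1}}$, the disintegration formula, the fact that the conditional bridge law is carried by paths with the prescribed increments, and inner regularity plus Urysohn to pass from smooth cylinder functions to indicators. The paper itself gives no proof beyond the sentence ``Following the proofs of Theorems~5 and~7 from \cite{AN}'', so your write-up is in fact more detailed than what appears here; the approach you describe is exactly what those referenced proofs do in the $k=2$ case.

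Two small points. First, a labeling slip: the integral representation you write down and use is Theorem~\ref{thm110} in this paper, not Lemma~\ref{lem2} (the latter is a density formula for cylindrical sets that appears later, in Section~2). Second, Theorem~\ref{thm110} is stated \emph{after} the support theorem in the paper's ordering, so you are invoking a later result to prove an earlier one; this is harmless because both are imported independently from \cite{AN} and the proof of the disintegration formula does not rely on the support statement, but it is worth a sentence acknowledging this so the reader does not suspect circularity. Your final paragraph already identifies the only genuine technical step---extending the pairing formula from $\mathcal{FC}_b^{\infty}$ to bounded continuous $F$ via dominated convergence against the finite kernel $\prod_j p^d_{t_{j+1}-t_j}(u_j)$---and that extension goes through exactly as you indicate.
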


\begin{thm} \label{thm110}
Let $F \in \mathcal{FC}_b^{\infty}\big( W_0^d\big) $. Then, the following relation holds
\begin{align*}
&\int_{W_0^d  } F(\omega) \theta_{u_1,\cdots,u_{k-1}} (d\omega) \\
&= \int_{\Delta_k  } \E \Big[F\Big | w(t_2)-w(t_1)=u_1,\cdots,w(t_k)-w(t_{k-1})=u_{k-1} \Big] \prod_{j=1}^{k-1}  p^d_{t_{j+1}- t_j }( u_j)dt_1\cdots dt_k .
\end{align*}
\end{thm}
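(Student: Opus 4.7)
The plan is to mimic the strategy used for the $k=2$ case (Theorem \ref{thm2}, from \cite{AN}). By definition of the measure $\theta_{u_1,\cdots,u_{k-1}}$ the left-hand side equals the duality pairing $(\rho_k(u_1,\cdots,u_{k-1}),\eta)$. Fix $\gamma < -(k-1)d/2$. Since the regularized functional $\rho_k^{\varepsilon} := \int_{\Delta_k}\prod_{j=1}^{k-1}p_\varepsilon^d(w(t_{j+1})-w(t_j)-u_j)\,dt_1\cdots dt_k$ belongs to $L^2$ and converges to $\rho_k(u_1,\cdots,u_{k-1})$ in $\D^{2,\gamma}$, and $\eta \in \D^{2,-\gamma}$ (as $\eta \in \mathcal{FC}_b^{\infty}$ lies in every Sobolev space), the pairing passes to the limit:
$$(\rho_k(u_1,\cdots,u_{k-1}),\eta) = \lim_{\varepsilon \to 0}\E[\rho_k^{\varepsilon}\,\eta].$$
Fubini's theorem (applicable because $\eta$ is bounded and $p_\varepsilon^d\in L^1$) then allows me to pull the expectation inside the $\Delta_k$-integral.

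For each fixed $(t_1,\ldots,t_k)\in\Delta_k$, the increments $V_j := w(t_{j+1})-w(t_j)$, $j=1,\ldots,k-1$, are independent centered Gaussian vectors in $\R^d$ with joint density $\prod_{j}p^d_{t_{j+1}-t_j}(v_j)$. Conditioning on $(V_1,\ldots,V_{k-1})$ yields
$$\E\Big[\eta\prod_j p_\varepsilon^d(V_j-u_j)\Big] = \int_{\R^{d(k-1)}}\Psi(v_1,\ldots,v_{k-1})\prod_{j}p_\varepsilon^d(v_j-u_j)\prod_{j}p^d_{t_{j+1}-t_j}(v_j)\,dv_1\cdots dv_{k-1},$$
where $\Psi(v_1,\ldots,v_{k-1}) := \E[\eta\mid V_j=v_j,\,j=1,\ldots,k-1]$. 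Because $\eta$ is a smooth bounded cylindrical function and the conditioning random vector is non-degenerate Gaussian on $\R^{d(k-1)}$ (since $t_j<t_{j+1}$), standard Gaussian regression shows that $\Psi$ is bounded and continuous in $(v_1,\ldots,v_{k-1})$. Hence, as the kernels $p_\varepsilon^d(\cdot -u_j)$ act as mollifiers at the points $u_j$, the inner integral converges pointwise (in $(t_1,\ldots,t_k)$) to $\Psi(u_1,\ldots,u_{k-1})\prod_{j}p^d_{t_{j+1}-t_j}(u_j)$, which is exactly the conditional expectation appearing in the theorem.

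To exchange the $\varepsilon\to 0$ limit with the integral over $\Delta_k$, I need an $\varepsilon$-uniform integrable dominating function. Performing the Gaussian convolution in $v_j$ gives $\int p_\varepsilon^d(v-u_j)p^d_{t_{j+1}-t_j}(v)\,dv = p^d_{\varepsilon+t_{j+1}-t_j}(u_j)$, so the integrand over $\Delta_k$ is bounded by $\|\eta\|_\infty\prod_{j}p^d_{\varepsilon+t_{j+1}-t_j}(u_j)$. Since each map $s\mapsto p^d_s(u_j)$ is bounded on $(0,\infty)$ when $u_j\neq 0$ (it attains a finite maximum and vanishes at both endpoints), this bound is uniform in $\varepsilon\in(0,1]$ and integrable on the bounded set $\Delta_k$. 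Dominated convergence then concludes the proof. The main subtlety is the continuity and boundedness of $\Psi$ at the point $(u_1,\ldots,u_{k-1})$, but this is handled by the smoothness of the cylindrical representation of $\eta$ together with the explicit Gaussian form of the Brownian conditional law.
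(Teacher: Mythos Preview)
Your proof is correct and follows essentially the same approach that the paper indicates: the paper simply refers to the proofs of Theorems~5 and~7 in \cite{AN} (the $k=2$ case), and your argument --- pass to the mollified functional, apply Fubini, identify the pointwise limit as a conditional expectation via Gaussian regression, and justify the interchange of limit and $\Delta_k$-integral by dominated convergence using the bound $\|\eta\|_\infty\prod_j p^d_{\varepsilon+t_{j+1}-t_j}(u_j)$ --- is precisely the natural $k$-fold extension of that proof. The paper's own proof of the closely related Theorem~\ref{99} exhibits exactly this structure.
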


Now let us focus on Example \ref{Example2}.

\begin{thm} \label{thm77}
Let the function $f$ be positive and such that 
$$ \int_{\R} f^2(x)e^{-\frac{x^2}{2}}dx <\infty. $$
Then, the limit
$$ 
\lim_{\varepsilon \to 0} \int_{\Delta_2  }  p^d_{\varepsilon}\big( w(t_2)-w(t_1)-u\big)f\big( \beta(t_2)-\beta(t_1)\big )    dt_1dt_2 $$
exists in every Sobolev space $\mathbb{D}^{2,\gamma}$ such that $\gamma <-\frac{d}{2} $, and is denoted by 
$$ \eta=\int _{\Delta_2} \delta_u \big(w(t_2)-w(t_1) \big) f\big( \beta(t_2)-\beta(t_1)\big ) dt_1dt_2 .$$
\end{thm}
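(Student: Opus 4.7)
The plan is to follow closely the strategy used in the proof of the preceding theorem on $\rho_k(u_1,\dots,u_{k-1})$, replacing the product of $\delta$-kernels by a single $\delta$-kernel in $w$ multiplied by a scalar-valued $L^2$ functional of $\beta$. The independence of $w$ and $\beta$ means the underlying Hilbert subspaces of $L^2([0,1],\R^{d+1})$ over which the two Brownian motions are constructed are orthogonal, so Proposition \ref{prop2}, and in particular the remark extending it to products with an $L^2$ random variable, applies to any combination of the form (generalized Wiener function in $w$) times (square-integrable functional of $\beta$).

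The first step is to analyze the integrand $p^d_\varepsilon(w(t_2)-w(t_1)-u)\,f(\beta(t_2)-\beta(t_1))$ pointwise in $(t_1,t_2)\in\Delta_2$. On the $w$ side, Proposition \ref{prop1} (together with a mild adaptation of its proof) yields convergence $p^d_\varepsilon(w(t_2)-w(t_1)-u)\to\delta_u(w(t_2)-w(t_1))$ in $\D^{2,\gamma}$ for $\gamma<-d/2$ and a uniform bound $\|p^d_\varepsilon(w(t_2)-w(t_1)-u)\|_{2,\gamma}\leq C\,p^d_{t_2-t_1}(cu)$. On the $\beta$ side, since $0<t_2-t_1\leq 1$ on $\Delta_2$ forces $e^{-x^2/(2(t_2-t_1))}\leq e^{-x^2/2}$, the $L^2$-norm satisfies
$$\E\bigl[f(\beta(t_2)-\beta(t_1))^2\bigr]=\int_{\R}f^2(x)p^1_{t_2-t_1}(x)\,dx\leq\frac{1}{\sqrt{2\pi(t_2-t_1)}}\int_{\R}f^2(x)e^{-x^2/2}\,dx<\infty,$$
using the assumption on $f$. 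Combining these estimates via Proposition \ref{prop2} (extended to products with an $L^2$ factor) gives
$$\bigl\|p^d_\varepsilon(w(t_2)-w(t_1)-u)\,f(\beta(t_2)-\beta(t_1))\bigr\|_{2,\gamma}\leq C'\,p^d_{t_2-t_1}(cu)(t_2-t_1)^{-1/4}$$
uniformly in small $\varepsilon>0$, together with pointwise convergence in $\D^{2,\gamma}$ as $\varepsilon\to 0$.

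Next I would use the inequality $\bigl\|\int\eta(t_1,t_2)\,dt_1dt_2\bigr\|_{2,\gamma}\leq\int\|\eta(t_1,t_2)\|_{2,\gamma}\,dt_1dt_2$, exactly as in the proof of the previous theorem, to reduce convergence of the integrated object to integrability of the dominating function over $\Delta_2$. With $\tau=t_2-t_1$, this dominating function behaves like $\tau^{-d/2-1/4}e^{-c^2\|u\|^2/(2\tau)}$ near $\tau=0^+$, and since $u\neq 0$ the exponential decay absorbs the negative power of $\tau$, yielding a finite integral. A Bochner-type dominated convergence argument then delivers convergence of the $\varepsilon$-approximations in $\D^{2,\gamma}$, which defines $\eta$.

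The main obstacle is the uniform-in-$\varepsilon$ estimate $\|p^d_\varepsilon(w(t_2)-w(t_1)-u)\|_{2,\gamma}\leq C\,p^d_{t_2-t_1}(cu)$. Unlike in the limiting $\delta_u$ case, the It\^o-Wiener expansion of $p^d_\varepsilon(w(t_2)-w(t_1)-u)$ carries an additional $\varepsilon$-dependent Gaussian factor. I would handle this by recognizing $p^d_\varepsilon(w(t_2)-w(t_1)-u)$ as the convolution (in the parameter $u$) of $\delta_u(w(t_2)-w(t_1))$ with the Gaussian kernel of variance $\varepsilon$, so its chaos coefficients are convolutions of the $\delta_u$ coefficients with Gaussians; the same Hermite polynomial estimates used in the proof of Proposition \ref{prop1} then apply, yielding constants that stay bounded as $\varepsilon\to 0$ (after letting $\alpha\to 1/2$).
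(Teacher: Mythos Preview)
Your proposal is correct and follows essentially the same route as the paper: apply Proposition~\ref{prop1} to $\delta_u(w(t_2)-w(t_1))$, use Proposition~\ref{prop2} (in the version allowing an $L^2$ factor) together with the estimate $\|f(\beta(t_2)-\beta(t_1))\|_{2,0}\leq C_1(t_2-t_1)^{-1/4}$, and then integrate the pointwise bound over $\Delta_2$. If anything you are slightly more careful than the paper, which does not explicitly verify the uniform-in-$\varepsilon$ bound needed to exchange the limit with the integral; your convolution argument for this step is a reasonable way to close that gap.
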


\begin{proof}
Using Propositions \ref{prop1} and \ref{prop2} one can deduce that for any $(t_1,t_2)\in \Delta_2$ and every $\gamma <-d/2 $, the limit, which will be denoted later by $\eta(t_1,t_2)$,
$$ \lim_{\varepsilon \to 0} p^d_{\varepsilon}\big( w(t_{2})-w(t_1)-u \big) f\big( \beta(t_2)-\beta(t_1)\big )  $$
exists in the Sobolev space $\D^{2,\gamma}$ and that 
$$ \big \| \eta (t_1,t_2)  \big\| _{2,\gamma}  \leqslant C p^d_{t_{2}-t_1}( cu) \times \big \| f\big( \beta(t_2)-\beta(t_1)\big )  \big\| _{2,0}  $$
for some constants $C>0,\, c>0$. But one can check that 
\begin{align*}  
\big \| f\big( \beta(t_2)-\beta(t_1)\big )  \big\| _{2,0} 
&=\sqrt{ \int_{\R} f^2(x)p_{t_2-t_1}^1(x)dx }\\
& \leqslant \dfrac{1}{(t_2-t_1) ^{ \frac{1}{4} } }  \sqrt{ \int_{\R} f^2(x)p_{1}^1(x)dx } =\dfrac{C_1}{(t_2-t_1) ^{ \frac{1}{4} } }    
\end{align*}
for some constant $C_1$. For every $n\in \N$ denote by $I_n(t_1,t_2)$ the $n$th multiple integral from the
It\^{o}-Wiener expansion of $\eta(t_1, t_2)$. Therefore,\\
\begin{align*}
\big \| \eta \big\| _{2,\gamma} ^2
&=\Big \| \int_{\Delta_2}  \eta (t_1,t_2)dt_1dt_2\Big\| _{2,\gamma} ^2\\
&= \sum_{n= 0}^{\infty} (n+1)^{\gamma }\E  \Big( \int_{\Delta_2}  I_n(t_1,t_2) dt_1 dt_2 \Big)^2    \\
&= \sum_{n= 0}^{\infty} (n+1)^{\gamma }\E  \int_{\Delta_2 \times \Delta_2}  I_n (t_1,t_2)I_n (s_1,s_2) dt_1 dt_2 ds_1ds_2    \\
&=  \int_{\Delta_2 \times \Delta_2} \Big< \eta (t_1,t_2),\eta (s_1,s_2) \Big>_{\D^{2,\gamma}}  dt_1 dt_2 ds_1ds_2     \\
&\leqslant  \int_{\Delta_2 \times \Delta_2} \big \| \eta (t_1,t_2)  \big\| _{2,\gamma} \big \| \eta (s_1,s_2)  \big\| _{2,\gamma} dt_1 dt_2 ds_1ds_2     \\
&=  \Big( \int_{\Delta_2 } \big \| \eta (t_1,t_2)  \big\| _{2,\gamma}   dt_1dt_2  \Big)^2    \\
&\leqslant C_2  \Big( \int_{\Delta_2 }  \dfrac{p^d_{t_{2}-t_1}( cu)  }{(t_2-t_1) ^{ \frac{1}{4} } }  dt_1dt_2   \Big)^2   <\infty
\end{align*}
where $\big< .,. \big>_{\D^{2,\gamma}}$ denotes the inner product in the Hilbert space $\D^{2,\gamma}$.
Theorem is proved.
\end{proof}

\begin{thm}\label{99}
Consider  the positive generalised Wiener function defined in Theorem \ref{thm77}
$$ \eta=\int _{\Delta_2} \delta_u \big(w(t_2)-w(t_1) \big) f\big( \beta(t_2)-\beta(t_1)\big ) dt_1dt_2 .$$
Then for every $F_1\in \mathcal{FC}_b^{\infty} \big( W_0^d \big)$ and every $F_2\in \mathcal{FC}_b^{\infty} \big( W_0^1 \big)$ we have 
$$ \big( \eta, F_1(w)F_2(\beta) \big) = \int_{\Delta_2  } \E \Big[F_1(w)\Big | w(t_2)-w(t_1)=u\Big]p_{t_2-t_1}^d(u)\E \Big[F_2(\beta)f\big( \beta(t_2)-\beta(t_1)\big )\Big] dt_1dt_2. $$
\end{thm}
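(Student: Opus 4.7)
The plan is to approximate $\eta$ by its regularization $\eta_\varepsilon$, to compute $\E[\eta_\varepsilon F_1(w)F_2(\beta)]$ explicitly using the independence of $w$ and $\beta$, and then to pass to the limit $\varepsilon\to 0$. By Theorem \ref{thm77}, $\eta$ is the $\mathbb{D}^{2,\gamma}$-limit, for any $\gamma<-d/2$, of
$$\eta_\varepsilon \;=\; \int_{\Delta_2} p_\varepsilon^d\bigl(w(t_2)-w(t_1)-u\bigr)\,f\bigl(\beta(t_2)-\beta(t_1)\bigr)\,dt_1dt_2.$$
Since $F_1(w)F_2(\beta)$ is smooth, bounded and cylindrical on the joint Wiener space, it belongs to every positive-index Sobolev space, in particular to $\mathbb{D}^{2,-\gamma}$. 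Continuity of the dual pairing therefore gives $(\eta,F_1(w)F_2(\beta)) = \lim_{\varepsilon\to 0}\E\bigl[\eta_\varepsilon\,F_1(w)F_2(\beta)\bigr]$.

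Next, I would apply Fubini to move the $\Delta_2$-integral outside the expectation, and then use the independence of $w$ and $\beta$ to factor the integrand:
$$\E\bigl[\eta_\varepsilon F_1(w)F_2(\beta)\bigr] \;=\; \int_{\Delta_2}\E\bigl[p_\varepsilon^d(w(t_2)-w(t_1)-u)F_1(w)\bigr]\,\E\bigl[f(\beta(t_2)-\beta(t_1))F_2(\beta)\bigr]\,dt_1dt_2.$$
The $\beta$-factor does not depend on $\varepsilon$ and is already in the form required by the statement. For the $w$-factor, conditioning on $Y=w(t_2)-w(t_1)$, whose law has density $p_{t_2-t_1}^d$, yields
$$\E\bigl[p_\varepsilon^d(Y-u)F_1(w)\bigr] \;=\; \int_{\R^d}\E[F_1(w)\mid Y=y]\,p_\varepsilon^d(y-u)\,p_{t_2-t_1}^d(y)\,dy.$$
Because $F_1\in\mathcal{FC}_b^{\infty}$, the map $y\mapsto\E[F_1(w)\mid Y=y]$ is smooth and bounded: decomposing $w$ as a Gaussian process into the projection onto the direction generating $Y$ and its orthogonal complement reduces the conditional expectation to a finite-dimensional Gaussian integral depending smoothly on $y$. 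Hence $p_\varepsilon^d(\cdot-u)$ acts as an approximate identity and the $\R^d$-integral converges pointwise, for each $(t_1,t_2)\in\Delta_2$, to $\E[F_1(w)\mid Y=u]\,p_{t_2-t_1}^d(u)$.

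To finish, I would push the limit through the $\Delta_2$-integration by dominated convergence. The $w$-factor is majorized by $\|F_1\|_\infty\,p_{\varepsilon+(t_2-t_1)}^d(u)$, which, because $u\neq 0$, stays bounded by a constant depending only on $u$ and $d$, uniformly in $\varepsilon$ and in $(t_1,t_2)\in\Delta_2$. The $\beta$-factor is controlled by $\|F_2\|_\infty\,\E|f(\beta(t_2)-\beta(t_1))|\leq \|F_2\|_\infty\,C_1/(t_2-t_1)^{1/4}$ via the Cauchy--Schwarz estimate already used in the proof of Theorem \ref{thm77}, giving an integrable majorant on $\Delta_2$. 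The main technical step is precisely this dominated-convergence argument near the diagonal $t_2=t_1$: the hypothesis $u\neq 0$ prevents $p_{\varepsilon+(t_2-t_1)}^d(u)$ from blowing up, and the integrability assumption on $f$ keeps the $\beta$-factor integrable in $(t_1,t_2)$; without either of these the interchange of limits would fail.
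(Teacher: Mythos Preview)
Your proof is correct and follows essentially the same route as the paper: pass to the approximation $\eta_\varepsilon$, use Fubini and the independence of $w$ and $\beta$ to factor the integrand, identify the pointwise limit of the $w$-factor via conditioning, and interchange limit and $\Delta_2$-integral by dominated convergence. In fact you are more careful than the paper, which invokes dominated convergence without exhibiting a majorant; your bounds $\|F_1\|_\infty\,p_{\varepsilon+(t_2-t_1)}^d(u)$ and $\|F_2\|_\infty\,C_1(t_2-t_1)^{-1/4}$ fill that gap nicely.
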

 
\begin{proof}
Using dominated convergence theorem, we obtain
\begin{align*}
\big( \eta, F_1(w)F_2(\beta) \big)
&= \lim_{\ve \to 0} \E \Big[F_1(w)F_2(\beta)\int_{\Delta_2  } p_{t_2-t_1+\ve}^d\big(w(t_2)-w(t_1) -u\big) f\big( \beta(t_2)-\beta(t_1)\big ) dt_1dt_2\Big]\\
&= \lim_{\ve \to 0}\int_{\Delta_2  }  \E \Big[F_1(w)F_2(\beta)p_{t_2-t_1+\ve}^d\big(w(t_2)-w(t_1) -u\big) f\big( \beta(t_2)-\beta(t_1)\big )\Big] dt_1dt_2\\
&= \lim_{\ve \to 0}\int_{\Delta_2  }  \E \Big[F_1(w)p_{t_2-t_1+\ve}^d\big(w(t_2)-w(t_1) -u\big)\Big]  \E \Big[F_2(\beta)f\big( \beta(t_2)-\beta(t_1)\big )\Big] dt_1dt_2\\
&= \int_{\Delta_2  }  \lim_{\ve \to 0} \E \Big[F_1(w)p_{t_2-t_1+\ve}^d\big(w(t_2)-w(t_1) -u\big)\Big]  \E \Big[F_2(\beta)f\big( \beta(t_2)-\beta(t_1)\big )\Big] dt_1dt_2\\
&=\int_{\Delta_2  } \E \Big[F_1(w)\Big | w(t_2)-w(t_1)=u\Big]p_{t_2-t_1}^d(u)\E \Big[F_2(\beta)f\big( \beta(t_2)-\beta(t_1)\big )\Big] dt_1dt_2.
\end{align*}
Theorem is proved.
\end{proof}
 
\begin{remark} We can also prove a similar formula for Wiener test function of the form 
 $$ F(w,\beta)=\lim_{N\to\infty} \sum_{k=1}^N F_k^N(w) G_k^N(\beta).  $$
 The dual pairing can be written as 
 $$ \big( \eta, F(w,\beta) \big) =\lim_{N\to \infty} \sum_{k=1}^N  \int_{\Delta_2  } \E \Big[F_k^N(w)\Big | w(t_2)-w(t_1)=u\Big]p_{t_2-t_1}^d(u)\E \Big[G_k^N(\beta)f\big( \beta(t_2)-\beta(t_1)\big )\Big] dt_1dt_2.  $$
\end{remark}
 
\begin{corollary}\label{98}
Let the function $f$ be positive and such that 
$$ \int_{\R} f^2(x)e^{-\frac{x^2}{2}}dx <\infty. $$
Then, the limit
$$ 
\eta=\lim_{\varepsilon \to 0} \int_{\Delta_2  }  p^d_{\varepsilon}\big( w(t_2)-w(t_1)-u\big)f\big( \beta(t_2)-\beta(t_1)\big )    dt_1dt_2 $$
is a positive generalised Wiener function. Consequently, there exists a unique finite positive measure $\theta_{u,f}$ on the Wiener space $(W_0^{d+1},\mathcal{B}(W_0^{d+1}) )$ such that 
$$\forall \; F\in \mathcal{FC}_b^{\infty}\big( W_0^{d+1}\big),\quad (\eta, F)=\int_{ W_0^{d+1} }  F(\omega )\,\theta_{u,f}(d\omega).$$
\end{corollary} 
 
\begin{thm}
When $u\to 0$ we have 
$$ \theta_{u,f} \Big( W_0^{d+1} \Big) = \mathcal{O} \Bigg( \dfrac{1}{ \|u\|^{d-\frac{3}{2}} }  \Bigg) .$$
\end{thm}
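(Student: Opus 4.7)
The plan is to start from the dual pairing identity in Theorem \ref{99}, applied with $F_1 \equiv 1$ and $F_2 \equiv 1$; this identifies
\begin{equation*}
\theta_{u,f}\big(W_0^{d+1}\big) = (\eta, 1) = \int_{\Delta_2} p^d_{t_2-t_1}(u)\,\E\big[f(\beta(t_2)-\beta(t_1))\big]\, dt_1\, dt_2.
\end{equation*}
Since $\beta(t_2)-\beta(t_1)$ is $\mathcal{N}(0, t_2-t_1)$-distributed, the substitution $s = t_2 - t_1$ (with the harmless factor $1-s \leqslant 1$ from the Jacobian of the simplex) reduces the bound to
\begin{equation*}
\theta_{u,f}\big(W_0^{d+1}\big) \leqslant \int_0^1 p_s^d(u)\, g(s)\, ds, \qquad g(s) := \int_{\R} f(x)\, p_s^1(x)\, dx.
\end{equation*}

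The next step is to control $g(s)$ by Cauchy--Schwarz, writing $g(s) \leqslant \big(\int_\R f^2(x)\, p_s^1(x)\, dx\big)^{1/2}$. For $s \in (0,1]$ the elementary bound $e^{-x^2/(2s)} \leqslant e^{-x^2/2}$ gives $p_s^1(x) \leqslant (2\pi s)^{-1/2} e^{-x^2/2}$, so the integrability assumption $\int_{\R} f^2(x)\, e^{-x^2/2}\, dx < \infty$ produces $g(s) \leqslant C\, s^{-1/4}$. Inserting the explicit expression for $p_s^d(u)$ reduces the problem to the scalar integral
\begin{equation*}
\theta_{u,f}\big(W_0^{d+1}\big) \leqslant C' \int_0^1 s^{-d/2 - 1/4}\, e^{-\|u\|^2/(2s)}\, ds.
\end{equation*}

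The final step extracts the power of $\|u\|$ via the substitution $v = \|u\|^2/(2s)$, which gives
\begin{equation*}
\int_0^1 s^{-d/2 - 1/4}\, e^{-\|u\|^2/(2s)}\, ds = \frac{2^{d/2 - 3/4}}{\|u\|^{d - 3/2}} \int_{\|u\|^2/2}^{\infty} v^{d/2 - 7/4}\, e^{-v}\, dv.
\end{equation*}
Since $d \geqslant 4$, we have $d/2 - 7/4 \geqslant 1/4 > -1$, so as $u \to 0$ the remaining integral converges to the finite constant $\Gamma(d/2 - 3/4)$, and the announced $\mathcal{O}\big(\|u\|^{-(d - 3/2)}\big)$ bound follows.

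The main delicate point is tuning the Cauchy--Schwarz split so that the singularity in $g(s)$ is exactly $s^{-1/4}$ rather than the cruder $s^{-1/2}$ that one would obtain, say, by bounding $f \leqslant 1 + f^2$ directly: the latter would yield only the weaker rate $\|u\|^{-(d-1)}$. Once this sharp estimate is in place, everything else is routine manipulation of Gaussian densities and a single Gamma-type substitution.
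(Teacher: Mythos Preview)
Your proof is correct and follows essentially the same route as the paper's: the identification $\theta_{u,f}(W_0^{d+1}) = (\eta,1)$ via Theorem~\ref{99}, the Cauchy--Schwarz split producing the $s^{-1/4}$ bound on $\E[f(\beta(t_2)-\beta(t_1))]$, and the reduction to $\int_{\Delta_2} p^d_{t_2-t_1}(u)\,(t_2-t_1)^{-1/4}\,dt_1dt_2$. The only difference is that the paper handles this last integral by invoking \cite[Prop.~1]{AN}, whereas you carry out the Gamma-type substitution explicitly.
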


\begin{proof}
Applying Theorems \ref{99}, \ref{98} and the Cauchy-Schwartz inequality yields 
\begin{align*}
\theta_{u,f} \Big( W_0^{d+1} \Big) 
&=\int_{ W_0^{d+1} } \theta_{u,f}(d\omega)\\
&=(\eta,1)\\
&=\int_{\Delta_2  } p_{t_2-t_1}^d(u)\E \Big[f\big( \beta(t_2)-\beta(t_1)\big )\Big] dt_1dt_2\\
&=\int_{\Delta_2  } p_{t_2-t_1}^d(u) \Big(\int_{\R} f(x)p_{t_2-t_1}^1(x)dx \Big) dt_1dt_2\\
&\leqslant \int_{\Delta_2  } p_{t_2-t_1}^d(u) \sqrt{\int_{\R} f^2(x)p_{t_2-t_1}^1(x)dx} \, dt_1dt_2\\
&\leqslant C_1\int_{\Delta_2  } p_{t_2-t_1}^d(u) \dfrac{1}{(t_2-t_1) ^{ \frac{1}{4} } }  \, dt_1dt_2.
\end{align*}
Thus, the result follows from \cite[Proposition 1]{DS23}.
\end{proof}

In the last part of this section we will focus on the generalised Wiener function $\eta$ in the case where the Brownian motions $\beta$ and $w$ are not independent. Mainly, we will consider the following situation 
$$\begin{cases}
\beta(t)=r w_1(t)+\sqrt{1-r^2} \, z(t)\\
\displaystyle \eta=\int _{\Delta_2} \delta_u \big(w(t_2)-w(t_1) \big) f\big( \beta(t_2)-\beta(t_1)\big ) dt_1dt_2\\
\displaystyle\phantom{\eta}=\lim_{\varepsilon \to 0} \int_{\Delta_2  }  p^d_{\varepsilon}\big( w(t_2)-w(t_1)-u\big)f\big( \beta(t_2)-\beta(t_1)\big )    dt_1dt_2
\end{cases}  $$
where $r\in (0,1)$ is a fixed real number, $z(t)$ is a one-dimensional Brownian motion independent from $w(t)$ and $w_1$ is the first coordinate of $w(t)$. In what follows we will try to derive some expressions for the dual pairing between $\eta$ and some specific kind of test functions.

\begin{lem}\label{lem5}
Let $h$ be a bounded continuous function from $\R^2$ to $\R$ and $X$ a real random variable independent from $w(t_2)-w(t_1)$. Then 
$$ \lim_{\ve \to 0} \E \Big[ h\big( w(t_2)-w(t_1),X \big) p_\ve ^d\big( w(t_2)-w(t_1)-u\big)\Big]=p_{t_2-t_1}^d(u)\E \big[ h ( u,X ) \big]  . $$
\end{lem}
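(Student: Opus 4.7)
The plan is to exploit the independence of $X$ from the Gaussian increment $Y := w(t_2)-w(t_1)$, whose density is $p^d_{t_2-t_1}$, and reduce the statement to a standard approximation-of-the-identity argument. First I would condition on $X$ and invoke Fubini to rewrite the left-hand side as
\begin{equation*}
\E\Big[ h(Y,X)\, p^d_\ve(Y-u)\Big] \;=\; \int_{\R} \Bigl( \int_{\R^d} h(y,x)\, p^d_{t_2-t_1}(y)\, p^d_\ve(y-u)\, dy \Bigr)\, \PP_X(dx),
\end{equation*}
where $\PP_X$ denotes the law of $X$ on $\R$.

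For each fixed $x$, the family $\{p^d_\ve(\,\cdot\, -u)\}_{\ve>0}$ is an approximation of the identity concentrated at $u$, and $y \mapsto h(y,x)\, p^d_{t_2-t_1}(y)$ is continuous at $y = u$ and bounded in absolute value by $\|h\|_\infty \|p^d_{t_2-t_1}\|_\infty$. After the substitution $y = u + \sqrt{\ve}\, z$ in the inner integral, dominated convergence with dominating function $\|h\|_\infty \|p^d_{t_2-t_1}\|_\infty\, p^d_1(z)$ yields, as $\ve \to 0$,
\begin{equation*}
\int_{\R^d} h(y,x)\, p^d_{t_2-t_1}(y)\, p^d_\ve(y-u)\, dy \;\longrightarrow\; h(u,x)\, p^d_{t_2-t_1}(u).
\end{equation*}

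Finally, to exchange this pointwise (in $x$) limit with the outer integral against $\PP_X$, I would observe that for $\ve \in (0,1]$ the inner integral is bounded in absolute value, uniformly in $x$, by $\|h\|_\infty \int_{\R^d} p^d_{t_2-t_1}(y)\, p^d_\ve(y-u)\, dy = \|h\|_\infty\, p^d_{t_2-t_1+\ve}(u)$, which stays below a finite constant independent of $\ve$ and $x$. A second application of dominated convergence, this time against the probability measure $\PP_X$, then produces the announced limit $p^d_{t_2-t_1}(u)\, \E[h(u,X)]$. There is no serious obstacle in this argument; the only mildly delicate point is maintaining a uniform bound as $x$ and $\ve$ vary, which is guaranteed by the convolution identity $p^d_{t_2-t_1}\ast p^d_\ve = p^d_{t_2-t_1+\ve}$.
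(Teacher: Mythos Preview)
Your argument is correct and follows essentially the same route as the paper: exploit independence together with the explicit Gaussian density of $Y=w(t_2)-w(t_1)$ and reduce to an approximation-of-identity limit. The only cosmetic difference is the order of conditioning: the paper conditions on $Y$ first, writing $\E[h(Y,X)\mid Y]=g(Y)$ with $g(y)=\E[h(y,X)]$ bounded continuous, and then takes the limit in one stroke, whereas you condition on $X$ and invoke dominated convergence twice---your version is in fact more explicit about the bounds justifying the interchange of limits.
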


\begin{proof}
We compute
\begin{align*}
&\lim_{\ve \to 0} \E \Big[ h\big( w(t_2)-w(t_1),X \big) p_\ve ^d\big( w(t_2)-w(t_1)-u\big)\Big]\\
&=\lim_{\ve \to 0} \E \Bigg[  \E \Big[h\big( w(t_2)-w(t_1),X \big)\Big| w(t_2)-w(t_1)  \Big] p_\ve ^d\big( w(t_2)-w(t_1)-u\big)  \Bigg]\\
&=\lim_{\ve \to 0} \E \Big[  \E \big[h\big( x,X \big) \big] \Big|_{x= w(t_2)-w(t_1)} p_\ve ^d\big( w(t_2)-w(t_1)-u\big)  \Big]\\
&=p_{t_2-t_1}^d(u)\E \big[ h ( u,X ) \big].
\end{align*}
Lemma is proved.
\end{proof}

\begin{prop}\label{P8}
Consider a test function of the form $F=F_1(w(s_2)-w(s_1))F_2(z)$ where $F_1\in \mathcal{C}_b^{\infty} \big( \R^d \big)$, $F_2\in \mathcal{FC}_b^{\infty} \big( W_0^1 \big)$ and $(s_1,s_2)\in \Delta_2$ is fixed. Then, 
\begin{align*}
&( \eta, F )= \int_{\Delta_2  } p_{t_2-t_1}^d(u)\E \Big[F_1(w(s_2)-w(s_1))\Big | w(t_2)-w(t_1)=u\Big]\\
&\phantom{( \eta, F )= \int_{\Delta_2  }} \times \E \Big[F_2(z)f\big( \beta(t_2)-\beta(t_1)\big )\Big | w(t_2)-w(t_1)=u\Big] dt_1dt_2. 
\end{align*} 
\end{prop}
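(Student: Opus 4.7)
The plan is to follow the strategy used in the proof of Theorem \ref{99}, with the crucial twist that $\beta$ is no longer independent of $w$, but has its first coordinate correlated via $\beta(t)=rw_1(t)+\sqrt{1-r^2}\,z(t)$. The idea is to exploit the residual independence between $z$ and $w$ in order to recover a factorization after conditioning, and to replace the elementary delta-convergence argument used in Theorem \ref{99} by the auxiliary Lemma \ref{lem5}.

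First I would start from the $L^2$-definition of $\eta$ and write $(\eta,F) = \lim_{\ve\to 0} \E\bigl[F_1(w(s_2)-w(s_1))\,F_2(z)\int_{\Delta_2} p_\ve^d(w(t_2)-w(t_1)-u)\,f(\beta(t_2)-\beta(t_1))\,dt_1\,dt_2\bigr]$, and then swap the expectation with the $(t_1,t_2)$-integral by dominated convergence, exactly as in Theorem \ref{99}; the dominating envelope is supplied by the $L^2$-estimate used in the proof of Theorem \ref{thm77}, combined with the boundedness of $F_1$ and $F_2$.

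Next, inside the resulting integrand I would first condition on $\sigma(w)$. Since $z$ is independent of $w$ and $\beta(t_2)-\beta(t_1)=r(w_1(t_2)-w_1(t_1))+\sqrt{1-r^2}(z(t_2)-z(t_1))$, the conditional expectation of $F_2(z)\,f(\beta(t_2)-\beta(t_1))$ given $\sigma(w)$ depends on $w$ only through the scalar $w_1(t_2)-w_1(t_1)$, and equals $\Phi(w_1(t_2)-w_1(t_1))$ where $\Phi(y):=\E[F_2(z)\,f(ry+\sqrt{1-r^2}(z(t_2)-z(t_1)))]$ is bounded and continuous in $y$ (continuity coming from the smoothing effect of the Gaussian average over $z(t_2)-z(t_1)$). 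The inner expectation therefore reduces to $\E\bigl[F_1(w(s_2)-w(s_1))\,\Phi(w_1(t_2)-w_1(t_1))\,p_\ve^d(w(t_2)-w(t_1)-u)\bigr]$. Conditioning once more, this time on $w(t_2)-w(t_1)$, turns $F_1(w(s_2)-w(s_1))$ into the continuous bounded function $g_1(x):=\E[F_1(w(s_2)-w(s_1))\mid w(t_2)-w(t_1)=x]$, and applying Lemma \ref{lem5} (in its immediate multivariate version, with $h(x):=g_1(x)\,\Phi(x_1)$ and $X$ irrelevant) yields the limit $p_{t_2-t_1}^d(u)\,g_1(u)\,\Phi(u_1)$.

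Finally I would identify $\Phi(u_1)$ with $\E[F_2(z)\,f(\beta(t_2)-\beta(t_1))\mid w(t_2)-w(t_1)=u]$, which holds because conditioning on $w(t_2)-w(t_1)=u$ fixes $w_1(t_2)-w_1(t_1)=u_1$ while, by independence of $z$ and $w$, leaving the distribution of $z(t_2)-z(t_1)$ unchanged; plugging this back into the $\Delta_2$-integral produces the claimed identity. The hardest part will be to rigorously justify the two interchanges of limit and integral, namely moving $\lim_{\ve\to 0}$ past the $\Delta_2$-integration and past the outer expectation, and to verify the continuity and boundedness of $\Phi$ needed to invoke Lemma \ref{lem5}; both reduce to integrability bounds of the same flavour as those already established in the proof of Theorem \ref{thm77}.
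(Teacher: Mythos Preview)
Your proposal is correct and follows essentially the same route as the paper: both arguments pass the limit through the $\Delta_2$-integral, exploit the independence of $z$ from $w$ together with the relation $\beta=rw_1+\sqrt{1-r^2}\,z$, and then invoke Lemma~\ref{lem5} to evaluate the $\ve\to 0$ limit before identifying the resulting expectations as conditional expectations. The only cosmetic difference is packaging: the paper writes the Gaussian regression $w(s_2)-w(s_1)=\alpha\,(w(t_2)-w(t_1))+X$ explicitly and applies Lemma~\ref{lem5} once with the independent block $(X,z)$ playing the role of the auxiliary variable, whereas you achieve the same effect via two successive conditionings (first on $\sigma(w)$, then on $w(t_2)-w(t_1)$) so that Lemma~\ref{lem5} is used in its degenerate form with $h(x)=g_1(x)\Phi(x_1)$ and no auxiliary $X$.
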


\begin{proof}
First we have a decomposition $ w(s_2)-w(s_1)=\alpha\times \big(w(t_2)-w(t_1)\big)+X  $ where $X$ is independent from $w(t_2)-w(t_1)$ and $\alpha= \dfrac{\E\big((w(t_2)-w(t_1))(w(s_2)-w(s_1))\big) }{d(t_2-t_1)} $.
It follows from Lemma \ref{lem5} that 
\begin{align*}
&\E \Big[ F_1\big( w(s_2)-w(s_1) \big) F_2(z)p_\ve ^d\big( w(t_2)-w(t_1)-u\big) f\big( \beta(t_2)-\beta(t_1)\big )\Big]\\
&=\E \Big[ F_1\big( w(s_2)-w(s_1) \big) F_2(z)p_\ve ^d\big( w(t_2)-w(t_1)-u\big) f\big( r\big( w_1(t_2)-w_1(t_1)\big)+\sqrt{1-r^2} \big( z(t_2)-z(t_1)\big)\big )\Big]\\
&\xrightarrow[ \varepsilon \to 0]{} p_{t_2-t_1}^d(u) \E \Big[ F_1\big( \alpha u+X \big) F_2(z)f\big( ru_1+\sqrt{1-r^2} \big( z(t_2)-z(t_1)\big)\big )\Big]\\
&\phantom{\xrightarrow[ \varepsilon \to 0]{}}= p_{t_2-t_1}^d(u) \E \Big[ F_1\big( \alpha u+X \big)\Big] \E \Big[ F_2(z)f\big( ru_1+\sqrt{1-r^2} \big( z(t_2)-z(t_1)\big)\big )\Big]
\end{align*}
where $u_1$ is the  first coordinate of $u$. Therefore,
$$( \eta, F )= \int_{\Delta_2  } p_{t_2-t_1}^d(u) \E \Big[ F_1\big( \alpha u+X \big)\Big] \E \Big[ F_2(z)f\big( ru_1+\sqrt{1-r^2} \big( z(t_2)-z(t_1)\big)\big )\Big] dt_1dt_2.$$
Using the identities
$$\begin{cases}
\E \Big[ F_1\big( \alpha u+X \big)\Big]=\E \Big[F_1(w(s_2)-w(s_1))\Big | w(t_2)-w(t_1)=u\Big],\\
\E \Big[ F_2(z)f\big( ru_1+\sqrt{1-r^2} \big( z(t_2)-z(t_1)\big)\big )\Big]=\E \Big[F_2(z)f\big( \beta(t_2)-\beta(t_1)\big )\Big | w(t_2)-w(t_1)=u\Big]
\end{cases}$$ 
leads us to the desired result.
\end{proof}

Proposition \ref{P8} can be extended as follows.

\begin{thm}
Consider a test function of the form $F=F_1(w(s_1),\cdots,w(s_n))F_2(z)$ where $F_1\in \mathcal{C}_b^{\infty} \big( \R^{d\times n} \big)$, $F_2\in \mathcal{FC}_b^{\infty} \big( W_0^1 \big)$ and $0\leqslant s_1<\cdots <s_n \leqslant 1$ are fixed. Then, 
\begin{align*}
&( \eta, F )= \int_{\Delta_2  } p_{t_2-t_1}^d(u)\E \Big[F_1(w(s_1),\cdots,w(s_n))\Big | w(t_2)-w(t_1)=u\Big]\\
&\phantom{( \eta, F )= \int_{\Delta_2  }} \times \E \Big[F_2(z)f\big( \beta(t_2)-\beta(t_1)\big )\Big | w(t_2)-w(t_1)=u\Big] dt_1dt_2. 
\end{align*} 
\end{thm}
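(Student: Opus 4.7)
The plan is to follow the same strategy as the proof of Proposition \ref{P8}, upgrading each step from a single increment $w(s_2)-w(s_1)$ to the full vector $\big(w(s_1),\cdots,w(s_n)\big)$. First I would write out the defining limit
$$
(\eta, F) = \lim_{\ve \to 0} \int_{\Delta_2} \E\Big[F_1\big(w(s_1),\cdots,w(s_n)\big) F_2(z)\, p^d_\ve\big(w(t_2)-w(t_1)-u\big)\, f\big(\beta(t_2)-\beta(t_1)\big)\Big]\, dt_1 dt_2,
$$
where the exchange of limit and integral is justified by dominated convergence exactly as in Theorem \ref{99}, using the bound $\|\eta(t_1,t_2)\|_{2,\gamma} \lesssim p^d_{t_2-t_1}(cu)/(t_2-t_1)^{1/4}$ from Theorem \ref{thm77}.

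Next I would establish a multivariate analogue of Lemma \ref{lem5}. Fix $(t_1,t_2)\in \Delta_2$ and decompose each $w(s_i)$ in its Gaussian projection onto $w(t_2)-w(t_1)$: set
$$
w(s_i) = \alpha_i\,\big(w(t_2)-w(t_1)\big) + X_i, \qquad \alpha_i = \frac{\Cov\big(w(s_i), w(t_2)-w(t_1)\big)}{d(t_2-t_1)},
$$
so that the vector $X=(X_1,\cdots,X_n)$ is jointly Gaussian and independent of $w(t_2)-w(t_1)$. Similarly, writing $\beta(t_2)-\beta(t_1) = r\big(w_1(t_2)-w_1(t_1)\big) + \sqrt{1-r^2}\big(z(t_2)-z(t_1)\big)$, the $z$-part is independent of $w$. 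Then by the classical fact that $p^d_\ve(\cdot-u)$ acts as an approximate identity in expectation against bounded continuous functions of Gaussian vectors, one obtains
\begin{align*}
&\lim_{\ve \to 0} \E\Big[F_1\big(\alpha_1 Y + X_1,\cdots,\alpha_n Y + X_n\big)\, F_2(z)\, p^d_\ve(Y-u)\, f\big(rY_1 + \sqrt{1-r^2}\,(z(t_2)-z(t_1))\big)\Big] \\
&\qquad = p^d_{t_2-t_1}(u)\, \E\big[F_1(\alpha_1 u + X_1,\cdots,\alpha_n u + X_n)\big]\, \E\big[F_2(z) f(ru_1 + \sqrt{1-r^2}(z(t_2)-z(t_1)))\big],
\end{align*}
where $Y = w(t_2)-w(t_1)$ and $Y_1$ is its first coordinate; the factorization uses independence of $X$ and $z$ from $Y$, and independence of $w$ and $z$.

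Finally I would identify the two factors with conditional expectations. Since $X_i = w(s_i) - \alpha_i(w(t_2)-w(t_1))$ and conditioning a jointly Gaussian vector on $w(t_2)-w(t_1)=u$ substitutes $u$ for that increment, we have
$$
\E\big[F_1(\alpha_1 u + X_1,\cdots,\alpha_n u + X_n)\big] = \E\big[F_1(w(s_1),\cdots,w(s_n)) \,\big|\, w(t_2)-w(t_1)=u\big],
$$
and likewise the second factor equals $\E\big[F_2(z) f(\beta(t_2)-\beta(t_1)) \,\big|\, w(t_2)-w(t_1)=u\big]$. Plugging back into the $dt_1 dt_2$ integral yields the stated formula.

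The main obstacle I foresee is a minor technical one: justifying the joint limit for the multivariate approximate-identity step in a way that cleanly handles the bounded continuous function $F_1$ of $n$ correlated Gaussian increments together with the $z$-dependent factor. This is essentially a direct generalization of Lemma \ref{lem5} to $h:\R^{d\times n}\times \R \to \R$ bounded continuous and $X$ a random vector independent of $w(t_2)-w(t_1)$, and it follows by conditioning first on $X$ and $z$ and using the fact that $p^d_\ve(\cdot-u)$ integrated against the density of $w(t_2)-w(t_1)$ converges to $p^d_{t_2-t_1}(u)$ times evaluation at $u$; the rest of the argument is routine.
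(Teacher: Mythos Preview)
Your proposal is correct and follows exactly the route the paper intends: the paper states this theorem as a direct extension of Proposition~\ref{P8} and omits the proof, so your write-up is precisely the detailed version of that implicit argument---project each $w(s_i)$ onto $w(t_2)-w(t_1)$ via Gaussian regression, apply the multivariate version of Lemma~\ref{lem5}, factor by independence of $z$ from $w$, and recognize the resulting expectations as conditional expectations given $w(t_2)-w(t_1)=u$.
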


\section{Large deviation principle for the measure $\theta_{u_1,\cdots,u_{k-1}}$}
\label{sec3}
In order to formulate a LDP for the measure $\theta_{u_1,\cdots,u_{k-1}}$ let us first find an explicit expression for this measure. In the following Lemma we state a primary expression for the measure $\theta_u$ which is the primitive form of the measure $\theta_{u_1,\cdots,u_{k-1}}$. 

\begin{lem}\label{lem1}
Let $0<t_1<\cdots<t_n<1$ and $A$ a closed subset of $\R^{nd}$. Define a closed cylindrical subset of $W_0^d$  
$$ C_{t_1,\cdots,t_n,A}=\{ \omega \in W_0^d\, ;\, (\omega (t_1),\cdots,\omega(t_n))\in A\}. $$
Then 
$$\theta_u\big(C_{t_1,\cdots,t_n,A}\big)=\int_{\Delta_2  } \lim_{\ve \to 0} \dfrac{ \mu \big\{ \omega \in W_0^d\, :\,  (\omega (t_1),\cdots,\omega(t_n))\in A, \, \omega(s_2)-\omega(s_1)\in B(u,\ve)\big\} }{ \lambda (B(u,\ve)) } ds_1ds_2 $$
where $B(u,\ve)$ is the ball, in $\R^d$, with center $u$ and radius $\ve$ and $\lambda$ is the Lebesgue measure. The limit inside the integral coincides with the density of the measure $\nu_{s_1,s_2,t_1,\cdots,t_n,A}$ defined on $\mathcal{B}(\R^d)$ by 
\begin{align*}
\nu_{s_1,s_2,t_1,\cdots,t_n,A} (M)
&=\mu \big\{ \omega \in W_0^d\, ;\, (\omega (t_1),\cdots,\omega(t_n))\in A , \, \omega(s_2)-\omega(s_1)\in M\big\}\\
&=\PP \big( (w (t_1),\cdots,w(t_n))\in A , \, w(s_2)-w(s_1)\in M\big). 
\end{align*} 
\end{lem}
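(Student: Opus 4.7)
The strategy is to reduce the claim to Theorem \ref{thm2} by approximating the indicator $\mathbf{1}_{C_{t_1,\ldots,t_n,A}}$ by smooth cylindrical test functions, and then identify the resulting conditional expectation with the density of $\nu_{s_1,s_2,t_1,\ldots,t_n,A}$ at the point $u$.

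First I would choose $f_k\in\mathcal{C}_b^{\infty}(\R^{nd})$ with $0\leqslant f_k\leqslant 1$ and $f_k\to\mathbf{1}_A$ pointwise (for instance by smoothing a suitable function of the distance to $A$; the closedness of $A$ is essential here). Setting $F_k(\omega)=f_k(\omega(t_1),\ldots,\omega(t_n))$, we have $F_k\in\mathcal{FC}_b^{\infty}(W_0^d)$, so Theorem \ref{thm2} (applied to $\rho_2(u)$) yields
$$\int_{W_0^d}F_k(\omega)\,\theta_u(d\omega)=\int_{\Delta_2}\E\bigl[F_k\,\bigl|\,w(s_2)-w(s_1)=u\bigr]\,p_{s_2-s_1}^d(u)\,ds_1ds_2.$$

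Next I would pass to the limit $k\to\infty$. On the left side, $\theta_u$ is a finite positive measure and $F_k$ is dominated by $1$, so dominated convergence gives $\int F_k\,d\theta_u\to\theta_u(C_{t_1,\ldots,t_n,A})$. On the right side, for fixed $s_1<s_2$ the joint law of $(w(t_1),\ldots,w(t_n),w(s_2)-w(s_1))$ is Gaussian, so the inner conditional expectation admits a regular version that is an integral of $f_k$ against a smooth Gaussian density; dominated convergence there produces $\PP((w(t_1),\ldots,w(t_n))\in A\mid w(s_2)-w(s_1)=u)$. A second application of dominated convergence (the integrand is bounded by $p_{s_2-s_1}^d(u)$, which is integrable on $\Delta_2$) handles the outer integral.

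It remains to identify the limit with the density expression. By disintegration of the Gaussian joint law, the measure $\nu_{s_1,s_2,t_1,\ldots,t_n,A}$ is absolutely continuous with respect to $\lambda$, with density
$$\frac{d\nu_{s_1,s_2,t_1,\ldots,t_n,A}}{d\lambda}(v)=p_{s_2-s_1}^d(v)\,\PP\bigl((w(t_1),\ldots,w(t_n))\in A\,\bigl|\,w(s_2)-w(s_1)=v\bigr).$$
Because the Gaussian density is smooth in $v$ and the conditional probability is continuous in $v$ (it is an integral of the joint Gaussian density over $A$), the density is continuous at $u$; the Lebesgue differentiation theorem therefore gives
$$p_{s_2-s_1}^d(u)\,\PP\bigl((w(t_1),\ldots,w(t_n))\in A\,\bigl|\,w(s_2)-w(s_1)=u\bigr)=\lim_{\ve\to 0}\frac{\nu_{s_1,s_2,t_1,\ldots,t_n,A}(B(u,\ve))}{\lambda(B(u,\ve))}.$$
Combining the two chains of equalities yields the stated formula.

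The main obstacle is the double interchange of limits on the right-hand side (the $k\to\infty$ approximation of the indicator and the $\ve\to 0$ ratio that defines the density), but both are clean because everything takes place under a Gaussian joint law with a smooth, uniformly controlled density; the only nontrivial ingredient is the continuity of the density in $v$, which legitimizes replacing the Lebesgue-differentiation limit by the pointwise density value at $u$.
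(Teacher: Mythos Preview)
Your proposal is correct and follows essentially the same route as the paper: approximate $\mathbf{1}_{C_{t_1,\ldots,t_n,A}}$ by smooth cylindrical functions, invoke Theorem~\ref{thm2}, pass to the limit on both sides via dominated convergence, and then identify $\E[\mathbf{1}_{C_{t_1,\ldots,t_n,A}}\mid w(s_2)-w(s_1)=u]\,p^d_{s_2-s_1}(u)$ with the Lebesgue density of $\nu_{s_1,s_2,t_1,\ldots,t_n,A}$ at $u$. The only cosmetic differences are that the paper builds the approximating family via the sandwich $\mathbf{1}_A\leqslant\phi_\delta\leqslant\mathbf{1}_{A_\delta}$ (making the role of closedness explicit) and cites \cite{AN} for the existence of continuous versions of the conditional expectations, whereas you argue the latter directly from the Gaussian structure; the substance is the same.
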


\begin{proof} Let $\delta>0$ and $\phi_{\delta} \in \mathcal{C}_b^{\infty} (\R^{nd}) $ such that 
$$ \mathbf{1}_{A} \leqslant \phi_\delta \leqslant \mathbf{1}_{A_\delta} $$
where $A_{\delta}=\cup _{x\in A} B_{(x, \delta)}$ is the $\delta$-neighborhood of $A$. Then, let $F_{\delta}(\omega)=\phi_{\delta} (\omega (t_1),\cdots, \omega(t_n) )$. It follows that 
$$ \mathbf{1}_{C_{t_1,\cdots,t_n,A} } \leqslant F_\delta \leqslant \mathbf{1}_{C_{t_1,\cdots,t_n,A_{\delta}}}. $$
Consequently, the family $\big( F_\delta \big) _{\delta >0}$ converges pointwise, as $\delta \to 0$, to $\mathbf{1}_{C_{t_1,\cdots,t_n,A} }$. Thus, using the dominated convergence theorem and Theorem \ref{thm2}, we deduce that 
\begin{align*}
\theta_u(C_{t_1,\cdots,t_n,A})
&=\int_{W_0^d  } \mathbf{1}_{C_{t_1,\cdots,t_n,A}}(\omega) \theta_{u} (d\omega) \\
&=\lim _{\delta \to 0} \int_{W_0^d  } F_{\delta}(\omega) \theta_{u} (d\omega)\\
&=\lim _{\delta \to 0} \int_{\Delta_2  } \E \big(F_{\delta}(w)\big | w(t)-w(s)=u\big)p_{t-s}^d(u)dsdt \,.
\end{align*}
Following \cite{DS23}, the conditional expectations $\E \big(F_{\delta}(w)\big | w(t)-w(s)=u\big)$ and $\E \big(\mathbf{1}_{C_{t_1,\cdots,t_n,A}}(w)\big | w(t)-w(s)=u\big)$ possess continuous versions. Using again the dominated convergence theorem we deduce
$$ \E \big(F_{\delta}(w)\big | w(t)-w(s)=u\big) \xrightarrow[ \delta \to 0]{} \E \big(\mathbf{1}_{C_{t_1,\cdots,t_n,A}}(w)\big | w(t)-w(s)=u\big). $$
For $(s,t)\in \Delta_2$ define a measure $ \nu_{W_0^d,s,t} $ on $\mathcal{B}(\R^d)$ by 
$$ \nu_{W_0^d,s,t}(M)=\int_M p_{t-s}^d(x)dx. $$
Now, with a last use of the dominated convergence theorem we obtain 
\begin{align*}
\theta_u(C_{t_1,\cdots,t_n,A})
&=\int_{\Delta_2  } \E \big(\mathbf{1}_{C_{t_1,\cdots,t_n,A}}(w)\big | w(t)-w(s)=u\big)p_{t-s}^d(u)dsdt \\
&=\int_{\Delta_2  } \dfrac{ d\nu_{s,t,t_1,\cdots,t_n,A}}{d\nu_{W_0^d,s,t}} (u)p_{t-s}^d(u)dsdt \\
&=\int_{\Delta_2  } \dfrac{ d\nu_{s,t,t_1,\cdots,t_n,A}}{d\lambda} (u)dsdt \\
&=\int_{\Delta_2  } \lim_{\ve \to 0} \dfrac{ \mu \big\{ \omega \in W_0^d\, :\,  (\omega (t_1),\cdots,\omega(t_n))\in A, \, \omega(t)-\omega(s)\in B(u,\ve)\big\} }{ \lambda (B(u,\ve)) } dsdt 
\end{align*}
which proves the result.
\end{proof}

\noindent The following lemma extends the previous lemma to another family of closed subsets of the Wiener space.

\begin{lem}\label{lem200}
Let $\mathbb{T}$ be a closed subset of $[0,1]$ and $A$ a closed subset of $\R^{d}$. Consider the closed subset of $W_0^d$ defined by 
$$ C_{\mathbb{T},A}=\{ \omega \in W_0^d\, ;\, \omega (t)\in A \; \forall \; t\in \mathbb{T}\}. $$
Then 
$$\theta_u\big(C_{\mathbb{T},A}\big)=\int_{\Delta_2  } \lim_{\ve \to 0} \dfrac{ \mu \big\{ \omega \in W_0^d\, :\,  \omega (t)\in A \; \forall \; t\in \mathbb{T}, \, \omega(s_2)-\omega(s_1)\in B(u,\ve)\big\} }{ \lambda (B(u,\ve)) } ds_1ds_2. $$
The limit inside the integral coincides with the density of the measure $\nu_{s_1,s_2,\mathbb{T} ,A}$ defined on $\mathcal{B}(\R^d)$ by 
\begin{align*}
\nu_{s_1,s_2,\mathbb{T},A} (M)
&=\mu \big\{ \omega \in W_0^d\, ;\, \omega (t)\in A \; \forall \; t\in \mathbb{T} , \,\omega(s_2)-\omega(s_1)\in M\big\}\\
&=\PP \big( w (t)\in A \; \forall \; t\in \mathbb{T} , \, w(s_2)-w(s_1)\in M\big). 
\end{align*} 
\end{lem}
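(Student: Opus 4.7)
The plan is to reduce the statement to Lemma~\ref{lem1} by approximating the closed set $\mathbb{T}$ from inside by a nested sequence of finite subsets. Since $[0,1]$ is separable, I would pick an increasing family of finite sets $\mathbb{T}_n=\{t_1^{(n)},\ldots,t_{k_n}^{(n)}\}\subset\mathbb{T}$ whose union $\bigcup_n\mathbb{T}_n$ is dense in $\mathbb{T}$. Continuity of Brownian paths combined with closedness of $A$ forces $C_{\mathbb{T},A}=\bigcap_{n\geqslant 1}C_{\mathbb{T}_n,A}$, with the cylinders decreasing to $C_{\mathbb{T},A}$. Each $C_{\mathbb{T}_n,A}$ falls under Lemma~\ref{lem1} with the closed product set $A^{k_n}=A\times\cdots\times A\subset\R^{dk_n}$, so that
$$\theta_u(C_{\mathbb{T}_n,A})=\int_{\Delta_2}I_n(s_1,s_2)\,ds_1\,ds_2,$$
where $I_n(s_1,s_2)$ denotes the Radon--Nikodym density of $\nu_{s_1,s_2,\mathbb{T}_n,A}$ with respect to Lebesgue measure, evaluated at~$u$.

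On the left-hand side, continuity from above of the finite measure $\theta_u$ gives $\theta_u(C_{\mathbb{T}_n,A})\to\theta_u(C_{\mathbb{T},A})$. For the right-hand side I would use the Gaussian (Brownian-bridge) decomposition
$$w(t)=\bar{w}(t)+\psi_{s_1,s_2}(t)\,\bigl(w(s_2)-w(s_1)\bigr),$$
where $\bar{w}$ is a continuous Gaussian process independent of the increment $w(s_2)-w(s_1)$ and $\psi_{s_1,s_2}$ is a deterministic continuous scalar function, to obtain the continuous-in-$u$ realization
$$I_n(s_1,s_2)=p_{s_2-s_1}^d(u)\,\PP\bigl(\bar{w}(t)+\psi_{s_1,s_2}(t)\,u\in A\ \forall t\in\mathbb{T}_n\bigr).$$
Because $\bar{w}$ has continuous paths, $A$ is closed, and $\bigcup_n\mathbb{T}_n$ is dense in $\mathbb{T}$, the events under the probability decrease monotonically to $\{\bar{w}(t)+\psi_{s_1,s_2}(t)\,u\in A\ \forall t\in\mathbb{T}\}$; monotone convergence therefore yields pointwise convergence of $I_n(s_1,s_2)$ to the analogous quantity attached to $\mathbb{T}$, which is exactly the density appearing in the statement.

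To pass from pointwise convergence of $I_n$ to convergence of the integrals over $\Delta_2$, I would apply dominated convergence with the uniform majorant $0\leqslant I_n(s_1,s_2)\leqslant p_{s_2-s_1}^d(u)$ (clear from the bridge representation, since a probability is at most~$1$); integrability of $p_{s_2-s_1}^d(u)$ on $\Delta_2$ is exactly the estimate already used in \cite[Prop.~1]{AN}. Combining the two limits then gives the desired formula.

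The main obstacle is controlling the continuous-in-$u$ version of the conditional probability for a set $C_{\mathbb{T},A}$ determined by a continuum of time points, which a priori is the monotone limit of the corresponding objects for finitely many times only in an almost-sure sense and would therefore identify the density only up to a Lebesgue-null set. The Brownian-bridge decomposition circumvents this: it turns the dependence on the conditioning value $u$ into an explicit deterministic drift inside an unconditional probability, so the density can be read off pointwise at $u$ and convergence as $\mathbb{T}_n\uparrow\mathbb{T}$ reduces to an elementary monotone-convergence argument. Everything else is dominated convergence.
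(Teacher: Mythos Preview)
Your proof is correct and shares the paper's core idea: approximate $\mathbb{T}$ by an increasing sequence of finite subsets dense in $\mathbb{T}$, invoke the finitely-many-times case, and pass to the limit using path continuity and closedness of $A$. The execution differs slightly. The paper reruns the proof of Lemma~\ref{lem1} with smooth test functions $F_n$ sandwiched between $\mathbf{1}_{C_{t_1,\dots,t_n,A^n}}$ and $\mathbf{1}_{C_{t_1,\dots,t_n,(A_{1/n})^n}}$, then appeals to the continuous versions of conditional expectations from \cite{AN}. You instead apply Lemma~\ref{lem1} as a black box to each $C_{\mathbb{T}_n,A}$, use continuity from above of the finite measure $\theta_u$ on the left, and on the right introduce the Brownian-bridge decomposition to turn the conditional density into an unconditional probability with a deterministic drift in $u$. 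This last step is a genuine improvement in clarity: it makes the pointwise-in-$u$ identification of the density and its monotone convergence as $\mathbb{T}_n\uparrow\mathbb{T}$ completely explicit, whereas the paper's ``proceed in the same way'' leaves the existence of a continuous version of $\E(\mathbf{1}_{C_{\mathbb{T},A}}\mid w(s_2)-w(s_1)=u)$ to the reader.
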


\begin{proof} Let $\{t_n,\,n\in \N^*\}$ be a countable dense subset of $\mathbb{T}$. For each $n\in \N^*$ define a closed cylindrical subset 
$$  C_{t_1,\cdots,t_n,A^n}=\{ \omega \in W_0^d\, ;\, \omega (t_j)\in A \; \forall \; j\in \{ 1,\cdots,n\} \}. $$
Following the proof of Lemma \ref{lem1}, for each $n\in \N^*$ there exists $F_n\in \mathcal{FC}_b^{\infty}\big( W_0^d\big) $ such that 
$$ \mathbf{1}_{C_{t_1,\cdots,t_n,A^n} } (\omega)\leqslant F_n (\omega)\leqslant \mathbf{1}_{C_{t_1,\cdots,t_n,( A_{1/n } )^n}}(\omega) \quad \forall \; \omega \in W_0^d. $$
Using the continuity of the elements $\omega$ of $W_0^d$ one can check that 
$$ \lim _{n\to\infty} \mathbf{1}_{C_{t_1,\cdots,t_n,A^n} } (\omega)= \lim _{n\to\infty} \mathbf{1}_{C_{t_1,\cdots,t_n,(A_{1/n })^n} } (\omega)=\mathbf{1}_{C_{\mathbb{T},A} } (\omega) \quad \forall \; \omega \in W_0^d. $$
Consequently, the sequence $( F_n)$ converges pointwise, as $n \to \infty$, to $\mathbf{1}_{C_{\mathbb{T},A} }$. Then we proceed in the same way as in the proof of Lemma \ref{lem1}. 
\end{proof}

Following the proof of Lemmas \ref{lem1} and \ref{lem200}, one can prove the following result.

\begin{lem}\label{lem2} Let $\mathbb{T}$ be a closed subset of $[0,1]$ and $A$ a closed subset of $\R^{d}$. Consider the closed subset of $W_0^d$ defined by 
$$ C_{\mathbb{T},A}=\{ \omega \in W_0^d\, ;\, \omega (t)\in A \; \forall \; t\in \mathbb{T}\}. $$
Then
\begin{align*}
&\theta_{u_1,\cdots,u_{k-1}}\big(C_{\mathbb{T},A}\big)=\\
&\int_{\Delta_k  } 
  \lim_{\ve \to 0} \dfrac{ \mu \big\{ \omega \in W_0^d\, :\,  \omega (t)\in A \; \forall \; t\in \mathbb{T}, \, \omega(t_{j+1})- \omega(t_j)\in B(u_j,\ve) ,j=1,\cdots,k-1\big\} }{\prod_{j=1}^{k-1} \lambda (B(u_j,\ve)) } dt_1\cdots dt_k  \,.
\end{align*} 
\end{lem}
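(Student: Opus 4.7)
The plan is to combine the approximation strategies from Lemmas \ref{lem1} and \ref{lem200}, using Theorem \ref{thm110} in place of Theorem \ref{thm2}. Fix a countable dense subset $\{t_n\}_{n\geqslant 1}$ of $\mathbb{T}$. For each $n$, exactly as in the proof of Lemma \ref{lem200}, pick $\phi_n\in \mathcal{C}_b^\infty(\R^{dn})$ with $\mathbf{1}_{A^n}\leqslant \phi_n\leqslant \mathbf{1}_{(A_{1/n})^n}$ and set $F_n(\omega)=\phi_n(\omega(t_1),\cdots,\omega(t_n))\in \mathcal{FC}_b^\infty$, so that
$$ \mathbf{1}_{C_{t_1,\cdots,t_n,A^n}}(\omega)\leqslant F_n(\omega)\leqslant \mathbf{1}_{C_{t_1,\cdots,t_n,(A_{1/n})^n}}(\omega). $$
Continuity of the paths $\omega$ together with density of $\{t_n\}$ in $\mathbb{T}$ forces $F_n(\omega)\to \mathbf{1}_{C_{\mathbb{T},A}}(\omega)$ pointwise on $W_0^d$.

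Next, apply Theorem \ref{thm110} to each $F_n$ to obtain
$$ \int_{W_0^d} F_n\, d\theta_{u_1,\cdots,u_{k-1}} = \int_{\Delta_k} \E\bigl[F_n(w)\,\bigl|\,w(t_{j+1})-w(t_j)=u_j,\, j=1,\cdots,k-1\bigr] \prod_{j=1}^{k-1} p^d_{t_{j+1}-t_j}(u_j)\, dt_1\cdots dt_k. $$
Passing to the limit $n\to\infty$, the left-hand side converges to $\theta_{u_1,\cdots,u_{k-1}}(C_{\mathbb{T},A})$ by dominated convergence against the finite positive measure $\theta_{u_1,\cdots,u_{k-1}}$. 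For the right-hand side, one needs a continuous version of the conditional expectation given the joint event $\{w(t_{j+1})-w(t_j)=u_j,\ j=1,\cdots,k-1\}$, together with pointwise convergence of $\E[F_n|\cdot]$ to $\E[\mathbf{1}_{C_{\mathbb{T},A}}|\cdot]$; this is obtained by the same argument as in \cite{AN} for Theorems 5 and 7, since the $(k-1)$ increments $w(t_{j+1})-w(t_j)$ are jointly Gaussian and independent, so conditioning on them produces a smooth Gaussian regression kernel in which the dominated convergence theorem can be applied a second time.

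Finally, rewrite the conditional expectation as a Radon--Nikodym derivative. Because the joint law of the increments $(w(t_{j+1})-w(t_j))_{j=1}^{k-1}$ under $\mu$ has Lebesgue density $\prod_{j=1}^{k-1}p^d_{t_{j+1}-t_j}(\cdot)$ on $\R^{d(k-1)}$, the product
$$ \E\bigl[\mathbf{1}_{C_{\mathbb{T},A}}\,\bigl|\,w(t_{j+1})-w(t_j)=u_j\bigr] \prod_{j=1}^{k-1} p^d_{t_{j+1}-t_j}(u_j) $$
is precisely the Lebesgue density at $(u_1,\cdots,u_{k-1})$ of the finite measure $\nu_{t_1,\cdots,t_k,\mathbb{T},A}$ on $\R^{d(k-1)}$ defined by $M\mapsto \mu\{\omega\in C_{\mathbb{T},A}:(\omega(t_{j+1})-\omega(t_j))_{j}\in M\}$. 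By the Lebesgue differentiation theorem this density equals the ratio-of-balls limit appearing in the statement, which completes the proof.

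The main obstacle is the second step: establishing continuity of the joint conditional expectation and the passage $\E[F_n|\cdot]\to \E[\mathbf{1}_{C_{\mathbb{T},A}}|\cdot]$ pointwise with a dominating function. This is the direct analogue, for simultaneous conditioning on $k-1$ independent Gaussian increments, of the single-increment continuity lemma proved in \cite{AN}; it is not conceptually new but must be carried out carefully since the dominating integrand along $\Delta_k$ now involves the product of $(k-1)$ Gaussian kernels and the integrability across the diagonal must be invoked as in the proof of Theorem \ref{thm110}.
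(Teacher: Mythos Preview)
Your proposal is correct and follows precisely the route the paper indicates: the paper's own proof of Lemma~\ref{lem2} is a one-line remark that one should ``follow the proof of Lemmas~\ref{lem1} and~\ref{lem200}'', and your write-up does exactly this, replacing Theorem~\ref{thm2} by its $k$-fold analogue Theorem~\ref{thm110} and carrying the single-increment continuity argument of \cite{AN} over to the $(k-1)$ independent Gaussian increments. The only point worth flagging is that the ratio-of-balls limit is asserted to hold at the \emph{specific} point $(u_1,\dots,u_{k-1})$, not merely Lebesgue-a.e.; this is justified (as in Lemma~\ref{lem1}) by the continuity of the density coming from the continuous version of the conditional expectation, which you do invoke.
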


\begin{remark}\label{rem10}
From Lemma \ref{lem2} on can deduce that for any closed subset $\mathbb{T}$ of $[0,1]$ and every closed $A\subset \R^d$ we have 
$$ \mu \big( C_{\mathbb{T},A} \big)=0 \Rightarrow \theta_{u_1,\cdots,u_{k-1}}  \big( C_{\mathbb{T},A} \big)=0. $$
But $\theta_u$ is not absolutely continuous with respect to Wiener mesure. Instead it is finitely absolutely continuous. The definition of finite absolute continuity and the main result regarding our considerations are presented in subsection \ref{FAC} of Appendix.
\end{remark}

\noindent The following proposition is an immediate consequence of Theorems \ref{PGWF}, \ref{pgwf_measure} and \ref{Ryabov}.
\begin{prop}\label{fac} Let $k\in \mathbb{N},\, k\geqslant 2$ and $u_1,\cdots, u_{k-1}\in\R^d\setminus\{0\}$. Then, the measure $\theta_{u_1,\cdots,u_{k-1}}$ is finitely absolutely continuous with respect to the Wiener mesure $\mu$.
\end{prop}

\begin{remark}\label{rem11}
We claim that the limit inside the integrals in Lemma \ref{lem2} can be written as $ \mu \big\{ \omega \in W_0^d\, :\,  \omega (t)\in A  \; \forall \; t\in \mathbb{T}, \, \omega(t_{j+1})- \omega(t_j)=u_j,\,j=1,\cdots,k-1\big\} .$
\end{remark}

\noindent For $0\leqslant t_1 < \cdots <t_k \leqslant 1$ define 
$$ E_{u,t_1,\cdots,t_k}=\left\lbrace \omega \in W_0^d\; : \; \omega(t_{j+1})- \omega(t_j)=u_j,\,j=1,\cdots,k-1\right\rbrace .  $$
Remark that 
$$\begin{cases}
E_{u_1,\cdots,u_{k-1}}=\left\lbrace \omega \in W_0^d\; : \; \exists \;0\leqslant t_1,\cdots,t_k \leqslant 1,\, \omega(t_{j+1})- \omega(t_j)=u_j,\,j=1,\cdots,k-1 \right\rbrace \\
\phantom{ E_{u_1,\cdots,u_{k-1}} }=\cup _{(t_1,\cdots,t_k)\in \Delta_k} E_{u_1,\cdots,u_{k-1},t_1,\cdots,t_k},\\
tE_{u_1,\cdots,u_{k-1},t_1,\cdots,t_k}=E_{tu_1,\cdots,tu_{k-1},t_1,\cdots,t_k} \; \forall \; t>0,.
\end{cases}$$
Remark also that for sets of the form $C_{\mathbb{T},A}$ and $t>0$ we have $ tC_{\mathbb{T},A}=C_{\mathbb{T},tA} $.\\

\begin{thm}\label{thm560}
The measure $\theta_{u_1,\cdots,u_{k-1}}$ yields the following large deviation upper bound. If $\mathbb{T}$ is a closed subset of $[0,1]$ and $A$ a closed subset of $\R^{d}$ then
$$ \limsup_{t \to \infty}  \frac{1}{t^2} \log \theta_{tu_1,\cdots,tu_{k-1}} \big(tC_{\mathbb{T},A}\big) \leqslant -\inf _{\phi \in  C_{\mathbb{T},A} \cap E_ {u_1,\cdots,u_{k-1} }} I(\phi) $$
where $I$ is the rate function associated with the Wiener measure. 
\end{thm}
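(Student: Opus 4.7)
The strategy is to use Lemma \ref{lem2} together with Brownian scaling to express $\theta_{tu_1,\ldots,tu_{k-1}}(tC_{\mathbb{T},A})$ as an integral against the Wiener measure at scale $1/t^2$, then to apply a conditional form of Schilder's theorem and integrate the resulting bound over the times $(t_1,\ldots,t_k)\in\Delta_k$ via a convex-combination trade-off. Using Lemma \ref{lem2}, Remark \ref{rem11} and the identity in law $w = t\cdot(w/t)$ with $w/t\sim\mu_{1/t^2}$, one rewrites
\begin{equation*}
\theta_{tu_1,\ldots,tu_{k-1}}(tC_{\mathbb{T},A}) = \int_{\Delta_k} \prod_{j=1}^{k-1} p^d_{t_{j+1}-t_j}(tu_j)\, Q_t(t_1,\ldots,t_k)\, dt_1\cdots dt_k,
\end{equation*}
where $Q_t(t_1,\ldots,t_k) := \PP_{\mu_{1/t^2}}\!\big(\phi \in C_{\mathbb{T},A}\,\big|\,\phi(t_{j+1})-\phi(t_j)=u_j,\; j=1,\ldots,k-1\big)$. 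The Gaussian prefactor factorises as $C_d(\vec t)\exp(-t^2 I(m_{\vec t}))$, where $\vec t=(t_1,\ldots,t_k)$, $C_d(\vec t)=(2\pi)^{-(k-1)d/2}\prod_j (t_{j+1}-t_j)^{-d/2}$, and $m_{\vec t}$ is the piecewise linear Cameron--Martin path matching the prescribed increments, with $I(m_{\vec t})=\tfrac{1}{2}\sum_{j=1}^{k-1}\|u_j\|^2/(t_{j+1}-t_j)$.

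For fixed $\vec t$ in the interior of $\Delta_k$, I would decompose $\phi = m_{\vec t}+\psi$, where $\psi$ (conditioned on having zero increment over each $[t_j,t_{j+1}]$) is a centred Gaussian ``bridge'' whose Schilder-scaling under $\mu_{1/t^2}$ is $\psi/t$. A direct calculation gives $I(m_{\vec t}+\psi)=I(m_{\vec t})+I(\psi)$ on the constraint set, so the conditional law of $\phi$ under $\mu_{1/t^2}$ satisfies a Schilder-type LDP with rate function $I(\phi)-I(m_{\vec t})$ on $E_{u_1,\ldots,u_{k-1},t_1,\ldots,t_k}$ and $+\infty$ elsewhere. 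Applying the LDP upper bound to the closed set $C_{\mathbb{T},A}\cap E_{u_1,\ldots,u_{k-1},t_1,\ldots,t_k}$ yields
\begin{equation*}
\limsup_{t\to\infty}\frac{1}{t^2}\log Q_t(\vec t) \leq -\big(J(\vec t)-I(m_{\vec t})\big),\qquad J(\vec t):=\inf_{\phi\in C_{\mathbb{T},A}\cap E_{u_1,\ldots,u_{k-1},t_1,\ldots,t_k}} I(\phi),
\end{equation*}
so that pointwise in $\vec t$ one has $\prod_j p^d_{t_{j+1}-t_j}(tu_j)\,Q_t(\vec t)\leq C_d(\vec t)\,e^{-t^2 J(\vec t)+o(t^2)}$.

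To integrate over $\Delta_k$, set $J^*:=\inf_{\vec t\in\Delta_k} J(\vec t)=\inf_{\phi\in C_{\mathbb{T},A}\cap E_{u_1,\ldots,u_{k-1}}}I(\phi)$, which is legitimate since $E_{u_1,\ldots,u_{k-1}}=\bigcup_{\vec t\in\Delta_k} E_{u_1,\ldots,u_{k-1},t_1,\ldots,t_k}$. Since $J(\vec t)\geq J^*$ and also $J(\vec t)\geq I(m_{\vec t})$ (the path $m_{\vec t}$ being the free minimiser of $I$ over $E_{u_1,\ldots,u_{k-1},t_1,\ldots,t_k}$), the convex combination $J(\vec t)\geq \alpha J^*+(1-\alpha)I(m_{\vec t})$ holds for every $\alpha\in(0,1)$, and therefore
\begin{equation*}
\theta_{tu_1,\ldots,tu_{k-1}}(tC_{\mathbb{T},A}) \leq e^{-\alpha t^2 J^*+o(t^2)} \int_{\Delta_k} C_d(\vec t)\, e^{-(1-\alpha)t^2 I(m_{\vec t})}\, d\vec t = e^{-\alpha t^2 J^*+o(t^2)}\, \int_{\Delta_k}\prod_{j=1}^{k-1} p^d_{t_{j+1}-t_j}\!\big(\sqrt{1-\alpha}\,tu_j\big)\, d\vec t,
\end{equation*}
and the last integral is finite and decays polynomially in $t$, by an immediate extension of Proposition 1 of \cite{AN}. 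Dividing by $t^2$, sending $t\to\infty$ and then $\alpha\to 1$ produces the claimed upper bound. The main obstacle is making the conditional Schilder estimate quantitative enough that the $o(t^2)$ survives integration over $\vec t$: away from the boundary of $\Delta_k$ this is classical (the bridge $\psi$ is a genuine Gaussian process with a standard sample-path LDP), and near the boundary the Gaussian factor $e^{-t^2 I(m_{\vec t})}$ decays much faster than the singularity of $C_d(\vec t)$, so that region contributes negligibly; the convex-combination device in the last display precisely codifies this trade-off and bypasses any delicate uniformity argument.
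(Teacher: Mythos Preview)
Your route diverges from the paper's. The paper's argument is very short: invoking Remark~\ref{rem11} it writes $\theta_{tu}(tC_{\mathbb{T},A})=\int_{\Delta_2}\mu\big(t(C_{\mathbb{T},A}\cap E_{u,s_1,s_2})\big)\,ds_1ds_2$, applies Schilder's upper bound directly to the integrand with a threshold $M$ taken to be uniform in $(s_1,s_2)$, bounds the inner infimum by the global one, and integrates to $\tfrac12\,e^{-t^2(\inf I-r)}$. In effect the paper applies Schilder to what is really a density rather than a probability of a set; your decomposition into the Gaussian prefactor $C_d(\vec t)\,e^{-t^2 I(m_{\vec t})}$ times a genuine conditional probability $Q_t(\vec t)$, followed by an LDP for the bridge process $\psi$, is the more honest reformulation of exactly that step, and correctly identifies the rate as $J(\vec t)-I(m_{\vec t})$ on each fibre.

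There is, however, a gap in your passage from the pointwise bound to the integral. The conditional Schilder estimate $Q_t(\vec t)\le e^{-t^2(J(\vec t)-I(m_{\vec t})-r)}$ holds for $t>T(\vec t,r)$, and $T(\vec t,r)\to\infty$ as $\vec t\to\partial\Delta_k$. Your convex inequality $J(\vec t)\ge \alpha J^*+(1-\alpha)I(m_{\vec t})$ is correct, and it does render $C_d(\vec t)\,e^{-(1-\alpha)t^2 I(m_{\vec t})}$ integrable over $\Delta_k$, but it does not touch the non-uniformity of the $o(t^2)$: you still need the pointwise bound to hold simultaneously for all $\vec t$ before you can invoke the convex inequality under the integral sign. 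The repair is what you hint at but do not execute: split $\Delta_k$ into a compact core $\Delta_k^\delta=\{\vec t:\,t_{j+1}-t_j\ge\delta\ \text{for all }j\}$, on which the bridge LDP is uniform (the bridge covariance varies continuously and stays non-degenerate), and a boundary layer on which the trivial bound $Q_t\le 1$ suffices because $I(m_{\vec t})\ge \min_j\|u_j\|^2/(2\delta)$ there; send $t\to\infty$ first, then $\delta\downarrow 0$, then $r\downarrow 0$. The convex-combination device controls the $C_d$ singularity but does not, by itself, bypass this uniformity argument.
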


\begin{proof}
It is enough to prove the result when $k=2$. Applying Lemma \ref{lem1} one can obtain
\begin{align*}
\theta_{tu}(tC_{\mathbb{T},A})
&=\theta_{tu}(C_{\mathbb{T},tA})\\
&=\int_{\Delta_2  } \mu \big\{ \omega \in W_0^d\, :\,  \omega (t)\in tA  \; \forall \; t\in \mathbb{T}, \,  \omega(s_2)-\omega(s_1)=tu\big\} ds_1ds_2 \\
&=\int_{\Delta_2  } \mu \big(  tC_{\mathbb{T},A} \cap E_{tu,s_1,s_2}\big) ds_1ds_2 \\
&=\int_{\Delta_2  } \mu \big(  t\big( C_{\mathbb{T},A} \cap E_{u,s_1,s_2}\big) \big)ds_1ds_2 \,.
\end{align*}
According to Theorem \ref{Schilder}
$$ \forall \; r>0 \; \exists \; M > 0\; ; \; \forall t> M, \; \mu \big(  t\big( C_{\mathbb{T},A} \cap E_{u,s_1,s_2}\big) \big) < e^{ -t^2 \big( \inf _{\phi \in  C_{\mathbb{T},A} \cap E_{u,s_1,s_2}} I(\phi)-r\big) }. $$
Consequently,
\begin{align*}
\theta_{tu}(tA)
&=\int_{\Delta_2  } \mu \big(  t\big( C_{\mathbb{T},A} \cap E_{u,s_1,s_2}\big) \big)ds_1ds_2 \\
&\leqslant \int_{\Delta_2  } e^{ -t^2 \big( \inf _{\phi \in  C_{\mathbb{T},A} \cap E_{u,s_1,s_2}} I(\phi)-r\big) }ds_1ds_2 \\
&\leqslant \int_{\Delta_2  } e^{ -t^2 \big( \inf _{(t_1,t_2)\in \Delta_2}\inf _{\phi \in  C_{\mathbb{T},A} \cap E_{u,s_1,s_2}} I(\phi)-r\big) }ds_1ds_2 \\
&=\dfrac{1}{2} e^{- t^2 \big(\inf _{\phi \in  A \cap E_{u }} I(\phi)-r\big)  }  \,.
\end{align*}
Taking $r$ to $0$ yields the result.
\end{proof}

 \begin{remark}
Remark that 
$$ \inf _{\phi \in  E_{u_1,\cdots,u_{k-1} }} I(\phi)  =\dfrac{1}{2}\Big( \sum_{j=1}^{k-1}\|u_j\|\Big)^2. $$
In order to prove it, let $\phi \in E_{u_1,\cdots,u_{k-1} }\cap H_1$. Then, there exists a square integrable function $h :[0,1] \to \R^d$ such  that $\phi(t)=\int_0^th(s)ds \; \forall \; t\in [0,1]$ and there exist real numbers $0\leqslant t_1<\cdots <t_k\leqslant 1$ such that $\int_{t_j}^{t_{j+1} }h(s)ds =u_j,\,j=1,\cdots,k-1$. Consequently,
\begin{align*}
I(\phi)=\frac{1}{2}\int_0^1\|h(s)\|^2ds 
&\geqslant \frac{1}{2}\Big(\int_0^1\|h(s)\|ds \Big)^2\\
&\geqslant \frac{1}{2}\Big( \sum_{j=1}^{k-1}\int_{t_j}^{t_{j+1}}\|h(s)\|ds \Big)^2\\
&\geqslant \frac{1}{2}\Big( \sum_{j=1}^{k-1} \Big\|\int_{t_j}^{t_{j+1}}h(s)ds \Big\| \Big)^2=\dfrac{1}{2}\Big( \sum_{j=1}^{k-1}\|u_j\|\Big)^2. 
\end{align*} 
Moreover, for a specific choice of $0=t_1<t_2<\cdots<t_k=1$, one can construct a piecewise linear function $\phi : [0,1]\to \R^d$ such that $\phi(t_1)=0,\;\phi(t_{j+1})-\phi(t_j)=u_j,\, j=1,\cdots,k-1,$ and 
$$ I(\phi)=\dfrac{1}{2}\Big( \sum_{j=1}^{k-1}\|u_j\|\Big)^2.  $$
\noindent In particular, for any closed $C\subset W_0^d$,
$$ \inf _{\phi \in  C\cap E_{u_1,\cdots,u_{k-1} }} I(\phi) \geqslant \dfrac{1}{2} \Big( \sum_{j=1}^{k-1}\|u_j\|\Big)^2\,. $$
\end{remark}

\begin{remark}
From Theorem \ref{thm560} one can see that the rate function $\widetilde{I}$ of the measure $\theta_u$ can be expressed in terms of the good rate function $I$ of the Wiener measure as 
$$\widetilde{I}(\phi)= \begin{cases} I(\phi), & \phi \in E_{u_1,\cdots,u_{k-1} } \\ +\infty, & \phi \notin E_{u_1,\cdots,u_{k-1} }.  \end{cases} $$
It follows that the level sets are given by
$$ \{ \widetilde{I} \leqslant \lambda \} = \{ I \leqslant \lambda \}\cap E_{u_1,\cdots,u_{k-1} }.  $$
And since $E_{u_1,\cdots,u_{k-1} }$ is a closed set and the level sets of the rate function $I$ are compact (according to Theorem \eqref{Schilder}) then the level sets of $\widetilde{I} $ are also compact. Therefore, according to Definition \eqref{rate}, $ \widetilde{I} $ is a good rate function.
\end{remark}

\section{Appendix} \label{Appendix}
\subsection{Some results about the measure $\theta_u$} \label{App1}
In \cite{DS23} the authors used a representation of positive generalised Wiener functions by measures on the Wiener space (see e.g. \cite[Theorem 4.1.]{Su88}) in order to prove that the $2$-fold self-intersection local time $\rho_2(u)$ can be represented by a finite positive measure $\theta_u$ on the classical Wiener space $(W_0^d,\mathcal{B}(W_0^d),\mu)$.

\begin{thm}\cite[Theorem 4]{DS23}\label{thm0} Let $d\geqslant 4$ and $u\in \R^d\setminus \{0\}$. Then,
$$\forall \; F\in \mathcal{FC}_b^{\infty}\big( W_0^d\big) ,\quad (\rho_2(u), F)=\int_{ W_0^d }  F(\omega )\,\theta_{u} (d\omega).$$
Here $\mathcal{FC}_b^{\infty}\big( W_0^d\big)$ denotes the set of Wiener test functions of the form 
$$F(\omega)=f(\ell_1(\omega),\cdots,\ell_n(\omega) )$$ 
where $f\in \mathcal{C}_b^{\infty}(\R^d)$ : the space of all bounded and infinitely differentiable functions from $\R^d$ to $\R$ with all their derivatives being bounded, and $\ell_1,\cdots,\ell_n \in \big( W_0^d \big) ^*$ : the topological dual of the Wiener space.
\end{thm}

Moreover, the following theorems were proved.
\begin{thm}\cite[Theorem 5]{DS23}\label{thm1}
Let $u\in \R^d,\,u\neq 0$. The support of the measure $\theta_u$ is included in the closed set : 
$$ E_u=\left\lbrace \omega \in W_0^d\; ; \; \exists \;0\leqslant s<t \leqslant 1,\, \omega(t)-\omega (s)=u \right\rbrace . $$
\end{thm}
\begin{thm}\label{thm2} \cite[Theorem 7]{DS23}
Let $\eta \in \mathcal{FC}_b^{\infty}\big( W_0^d\big) $. Then, for every $u\in \R^d\setminus \{0\}$,
\begin{equation} \label{integ.eq}
\int_{W_0^d  } \eta(\omega) \theta_{u} (d\omega) = \int_{\Delta_2  } \E \big(\eta\big | w(t)-w(s)=u\big)p_{t-s}^d(u)dsdt 
\end{equation}
Moreover, both sides in \eqref{integ.eq} are continuous with respect to the variable $u\in \R^d\setminus \{0\}$.
\end{thm}
\noindent We recall also that 
$$
\theta_{u} \big( W_0^d\big)= \int_{\Delta_2  } p_{t-s}^d(u)dsdt  =: m(u,d)
$$
and that $ \widetilde{\theta_u}=\dfrac{\theta_u }{m(u,d) } $ is a probability measure. 

\subsection{LDP for the Wiener measure} \label{App2}
The following is Schilder theorem which states a LDP for the Wiener measure.
\begin{thm}(Schilder)\cite[Theorem 5.2.3]{DZ09} \label{Schilder}
Consider the Wiener measure $\mu$ and for $\varepsilon >0$ let $\mu _\varepsilon =\mu \Big( \frac{1}{\sqrt{\varepsilon} } \;. \Big) $. Then, $\{\mu_\varepsilon \}$ satisfies, in the Wiener space $W_0^d$, a LDP with good rate function 
$$ I(\phi)=\begin{cases}
\frac{1}{2} \int _0^1 \|\phi '(t)\|^2dt, & \phi \in H_1\\
\infty ,& \text{otherwise}.\end{cases}  $$ 
Here $H_1$ is the space of all absolutely continuous functions with value $0$ at $0$ that possess a square integrable derivative and $ \|\cdot \| $ denotes the Euclidean norm in $\R^d$.
\end{thm}
\subsection{Finite absolute continuity} \label{FAC}
Finite absolute continuity was first introduced by A.A. Dorogovstev in \cite{Dor00} in order to compare Gaussian measures in Banach spaces.
\begin{defn}\cite[Definition 2]{Dor00}
Let $\nu_1, \nu_2$ be two probability measures on a Banach space $X$, which have finite weak moments of any order. We say that $\nu_1$ is finitely absolutely continuous with respect to $\nu_2$ if for any $n\in \N$ there exists a constant $c_n>0$ such that for any polynomial $P : \R^n\to \R$ with degree at most $n$ and any $\ell_1,\cdots, \ell_n \in X^*$ we have 
\begin{align*}
& \Big| \int _{ X } P ( \ell_1(\omega),\cdots , \ell_n (\omega) ) d\nu_1(\omega) \Big| \\
& \leqslant c_n \Big(  \int _{ X } P^2 ( \ell_1(\omega),\cdots, \ell_n (\omega) ) d\nu_2(\omega) \Big) ^{\frac{1}{2} }. 
\end{align*} 
\end{defn}

\begin{thm}\cite[Proposition 1]{Riabov11} \label{Ryabov}
Let $\nu $ be the measure on the Wiener space associated to some positive generalised Wiener function. Then $\nu$ is finitely absolutely continuous with respect to the Wiener measure.
\end{thm}
\section*{Disclosure statement}
The authors confirm that there are no relevant financial or non-financial competing interests to report.

\section*{Acknowledgment.} The authors are very grateful to the referee for the careful reading and the valuable comments that were useful for the improvement of this work.

\end{document}